\newtheorem{lemma}{Lemma}
\newtheorem{example}{Example}
\newtheorem{proposition}{Proposition}
\newtheorem{theorem}{Theorem}
\newtheorem{corollary}{Corollary}
\newtheorem{remark}{Remark}
\newtheorem{definition}{Definition}
\begin{document}
\keywords{Gain graph, Balance, Adjacency matrix, Spectrum, Cover graph, Group representation, Fourier transform.}

\title{A group representation approach to balance of gain graphs}

\author[M. Cavaleri]{Matteo Cavaleri}
\address{Matteo Cavaleri, Universit\`{a} degli Studi Niccol\`{o} Cusano - Via Don Carlo Gnocchi, 3 00166 Roma, Italia}
\email{matteo.cavaleri@unicusano.it}

\author[D. D'Angeli]{Daniele D'Angeli}
\address{Daniele D'Angeli, Universit\`{a} degli Studi Niccol\`{o} Cusano - Via Don Carlo Gnocchi, 3 00166 Roma, Italia}
\email{daniele.dangeli@unicusano.it}

\author[A. Donno]{Alfredo Donno}
\address{Alfredo Donno, Universit\`{a} degli Studi Niccol\`{o} Cusano - Via Don Carlo Gnocchi, 3 00166 Roma, Italia}
\email{alfredo.donno@unicusano.it}

\begin{abstract}
We study the balance of $G$-gain graphs, where $G$ is an arbitrary group, by investigating their adjacency matrices and their spectra.
As a first step, we characterize switching equivalence and balance of gain graphs in terms of their adjacency matrices in $M_n(\mathbb C G)$. Then we introduce a represented adjacency matrix, associated with a gain graph and a group representation, by extending the theory of Fourier transforms from the group algebra $\mathbb C G$ to the algebra $M_n(\mathbb C G)$. We prove that a gain graph is balanced if and only if the spectrum of the represented adjacency matrix associated with any (or equivalently all) faithful unitary representation of $G$ coincides with the spectrum of the underlying graph, with multiplicity given by the degree of the representation.
We show that the complex adjacency matrix of unit gain graphs and the adjacency matrix of a cover graph are indeed particular cases of our construction. This enables us to recover some classical results and prove some new characterizations of balance in terms of spectrum, index or structure of these graphs.
\end{abstract}

\maketitle

\begin{center}
{\footnotesize{\bf Mathematics Subject Classification (2010)}: 05C22, 05C25, 05C50, 20C15, 43A32.}
\end{center}

\section{Introduction}
A \emph{$G$-gain graph} is a graph where an element of a group $G$, called gain, is assigned to each oriented edge, in such a way that
the inverse element is associated with the opposite orientation. Gain graphs can be regarded as a generalization of \emph{signed graphs}, where the group is $\{-1,1\}$, extensively studied even beyond the graph theory (see \cite{zasbib} for an extended and periodically updated bibliography).
Gain graphs can  be also considered as particular cases of \emph{biased graphs} \cite{zaslavsky1}, which are graphs where a special subset of circles is selected.
The notion of gain graph is also strictly related to that of \emph{voltage graph}: however, the theory of voltage graphs especially moves towards graph-coverings (see  \cite{gross}) whereas among the key notions concerning signed and gain graphs there are switching equivalence and balance (see, for instance, \cite{Harary,zaslavsky1}).
In this paper, we are interested in studying the balance property for gain graphs on an arbitrary group, via a spectral investigation which is realized by using Group representation theory and a Fourier transform approach.\\
\indent The balance of signed graphs can be characterized in several equivalent ways: the positiveness of circles, the switching equivalence with the \emph{underlying graph} (that is, the same graph endowed with a positive constant signature), or equivalently, the existence of a \emph{potential function} on the vertices. Analogous characterizations hold for gain graphs (see \cite{zaslavsky1}).
But, for signed graphs,  also spectral characterizations of balance are largely studied, mostly by using the \emph{signed Laplacian matrix} and the \emph{signed adjacency matrix}, whose entries are in $\{-1,0,1\}$ (e.g. \cite{francescob, zasmat}). A classical result in this sense states that \emph{a signed graph is balanced if and only if the adjacency matrix is cospectral with the adjacency matrix of the underlying graph} \cite{acharya}. Zaslavsky asked in \cite{zasbib} for a generalization of this result to complex unit gain graphs: we give it for every group admitting a \emph{finite dimensional faithful unitary representation} (see Theorem \ref{teo}).\\
\indent The first issue one is dealing with, when working with gain graphs on an arbitrary group $G$, is to define an appropriate adjacency matrix allowing to develop spectral computations. The most natural adjacency matrix of a gain graph on $G$ has in fact entries in $G\cup\{0\}$. When $G$ is a subgroup of the multiplicative group of the complex numbers, such a matrix takes values in $\mathbb{C}$ and the spectrum can be studied in a classical sense.
Moreover, if the gains are unit complex numbers (that is, $G$ is a subgroup of the complex unit group $\mathbb T$), this natural adjacency matrix is Hermitian and its spectrum is real. This case has been largely investigated in the last decades, and some spectral characterizations of the balance have been given (see \cite{reff1,adun,shahulgermina}). On the other hand, in voltage graph theory the solution of this issue is given by the fact that the gains are permutations and the natural adjacency matrices associated with a voltage graph are those of their covering graphs.\\
\indent At the best of our knowledge, the present paper is the first attempt to study switching equivalence and balance in gain graphs by investigation of their adjacency matrices and their spectra, for an arbitrary group $G$, which is not necessarily a subgroup of $\mathbb{C}$, or an Abelian group, or a finite group.\\
\indent As a first step, we consider in Section \ref{section4} an abstract adjacency matrix $A$ for a $G$-gain graph, whose entries belong to the group algebra $\mathbb{C}G$. These matrices have already been studied in \cite{dalfo} in the context of covers of voltage digraphs. The matrix set $M_n(\mathbb{C}G)$ inherits a product and a trace from the group algebra and is in fact an algebra itself. We prove in Theorem~\ref{swe} that two gain graphs are switching equivalent if and only if their adjacency matrices are conjugated in $M_n(\mathbb{C}G)$ by a diagonal matrix with diagonal entries in $G$ (the analogue of the \emph{switching matrix} for signed graphs). Moreover, we prove in Theorem~\ref{thtraccia} that a gain graph is balanced if and only if the trace of each power of $A$ equals the trace of the same  power of the adjacency matrix of the underlying graph. Even more, for balance it is enough to check the asymptotic equivalence of the  two real sequences given by the  traces of their increasing powers.\\
\indent In order to work with complex matrices, as a second step, inspired by the existing theory for covers of voltage graphs (\cite{dalfo, gross, mizuno }), we introduce in Section \ref{sectionbalancenovembre} the \emph{represented adjacency matrix} $A_\pi$. The latter can be described starting from the adjacency matrix $A\in M_n(\mathbb{C}G)$ of the considered gain graph, and replacing each gain entry $g \in G$ by the matrix $\pi(g)$ associated with a complex representation $\pi$ of the group $G$. In this way, we obtain a sort of adjacency matrix of the gain graph, whose entries instead belong to $\mathbb{C}$. Moreover, if $\pi$ is unitary, the matrix is Hermitian and, if $\pi$ is  faithful, the represented adjacency matrix contains all information about the gain graph. If this is not the case, we  can consider simultaneously a complete system of irreducible representations, in such a way that their kernels have trivial intersection.
The construction of $A_\pi$ from $A$  is actually an extension of the \emph{Fourier transform} from the group algebra $\mathbb C G$ to $M_n(\mathbb{C}G)$, and its properties are shown in Proposition \ref{productfou}. This enables us to prove, in Theorem~\ref{teo}, that a gain graph is balanced if and only if for any (or, equivalently, for all) faithful unitary representation $\pi$ the spectrum of the represented adjacency matrix $A_\pi$ consists of $\deg(\pi)$ copies of the adjacency spectrum of the underlying graph. Therefore, Theorem~\ref{teo} strongly generalizes the main result of \cite{acharya}.\\
\indent It is remarkable that our results apply also to non-Abelian and infinite groups, and they hold for any unitary representation $\pi$. By the way, in the classical cases, our construction is coherent with the existent theory and produces, on the one hand, alternative proofs of known results, as in the case of complex unit gain graphs (see Corollaries \ref{toro} and \ref{primoautovalore}), on the other hand new criteria for balance, as in the case of finite groups (see Corollary \ref{bfiniti} and Remark \ref{abelian} for the Abelian case). In particular, it turns out that, for a connected gain graph on the group of the $n$-th roots of the unity, and so also for a connected signed graph, the balance can be checked only by means of the first eigenvalue of its adjacency matrix. For a connected gain graph on any finite group $G$, it suffices to look at the first eigenvalue, and its multiplicity, of the represented adjacency matrix $A_\pi$, where $\pi$ is any faithful unitary representation; alternatively, one can refer to a complete system of irreducible representations of $G$.\\
\indent In Section \ref{cover}, we focus our attention on the represented adjacency matrix associated with the \emph{left regular representation} of $G$: it turns out that this matrix is nothing but the adjacency matrix of the \emph{cover graph} (see Definition~\ref{coverdef}). On the one hand, we apply the results from the previous sections in order to give new, at this level of generality, characterizations of balance via the spectrum, the index and structure of the cover graph (see Theorem~\ref{coverb}). On the other hand, we give one more description of the decomposition of the spectrum of the cover graph in terms of the irreducible representations of the group, result first proved in \cite{mizuno}. It is a remarkable fact that our methods  can be also applied to other issues: for instance, every time the adjacency matrix of a $\mathbb T$-gain graph is also a represented adjacency matrix of a smaller gain graph, a decomposition of the
representation induces  a decomposition of the spectrum (see Example \ref{exampleklein}). This opens a challenge regarding the possibility of  recognizing gain graphs that are ``gain-cover''.\\
\indent It is worth mentioning that our effort of formally defining a Fourier transform on $M_n(\mathbb C G)$ in Section~\ref{fou} is rewarded also by our proof of Proposition~\ref{prop5} and its applications in Section~\ref{circulant}, where we show, in a rigorous and self-contained way, the existence of a decomposition of the spectrum of $G$-block circulant matrices (decomposition already appearing in \cite{g-circ}).

\section{Preliminaries on representations of groups}
Let $G$ be a (finite or infinite) group, with neutral element $1_G$,  and let $V$ be a vector space of dimension $k$ over $\mathbb C$. A \textit{representation} of $G$ on $V$ is a group homomorphism
$
\pi\colon G\to GL_k(V),
$
where $GL_k(V)$ is the general linear group on $V$, i.e., the set of all bijective $\mathbb{\mathbb C}$-linear maps from $V$ to itself. Denote by $M_k(\mathbb C)$ the set of all square matrices of size $k$ with entries in $\mathbb{C}$, and by $GL_k(\mathbb{C})$ the group of all invertible matrices in $M_k(\mathbb{C})$. Then $GL_k(V)$ can be naturally identified with $GL_k(\mathbb{C})$. With a small abuse of notation we denote by $\pi$ the homomorphism that associates with an element $g\in G$ the matrix in $GL_k(\mathbb{C})$ corresponding to $\pi(g)$. The dimension of $V$ is called the \emph{degree} of $\pi$, and it will be denoted $\deg(\pi)$. Roughly speaking, a representation is a way to represent the elements of a group as matrices acting bijectively on a vector space. The representation theory is a well studied area of research (see, for instance, \cite{fulton}, for the general theory).\\
\indent A representation $\pi$ is \emph{unitary} if $\pi(g)\in U_k(\mathbb{C})$, for each $g\in G$, with $U_k(\mathbb{C}) = \{M\in GL_k(\mathbb{C}) : M^{-1}=M^\ast\}$, where $M^*$ is the Hermitian transpose of $M$. Two representations $\pi$ and $\pi'$ of degree $k$ of a group $G$ are said to be equivalent, or $\pi\sim\pi'$, if there exists a matrix $S\in GL_k(\mathbb{C})$ such that, for any $g\in G$, it holds $\pi'(g)=S^{-1}\pi(g)S$. This is an equivalence relation and it is known that, for a finite group $G$, each class contains a unitary representative: this is the reason why working with unitary representations is not restrictive in the finite case. \\
\indent We will denote by $I_k$  the identity matrix of size $k$, and by $O_{k_1,k_2}$ the zero matrix of size $k_1\times k_2$.
 The \textit{kernel} of a representation $\pi$ is $\ker(\pi)=\{g\in G:\, \pi(g)=I_k\}$, a representation is said to be \textit{faithful} if $\ker(\pi)=\{1_G\}$. Once we have a non-faithful representation, we can quotient $G$ by $\ker(\pi)$ in order to get a faithful representation of the group $G/\ker(\pi)$.\\
The \textit{character} $\chi_\pi$ of a representation $\pi$ is the map $\chi_{\pi}:G\to \mathbb{C}$ defined by $\chi_\pi(g)=Tr(\pi(g))$, that is, the trace of the matrix $\pi(g)$. Two representations have the same character if and only if they are equivalent.

Given two matrices $A\in M_{n_1}(\mathbb C)$ and $B \in M_{n_2}(\mathbb C)$, one can construct the  \emph{direct sum}
 $A \oplus B\in M_{n_1+n_2}(\mathbb C)$ and the  \emph{Kronecker product} $A\otimes B\in M_{n_1n_2}(\mathbb C)$ as
\begin{align*}
A \oplus B &=\left(
         \begin{array}{cc}
           A & O_{n_1,n_2} \\
           O_{n_2,n_1} & B \\
         \end{array}
       \right);\\
(A\otimes B)_{n_2(i-1)+r,n_2(j-1)+p}&=A_{i,j} B_{r,p}\qquad i,j=1,\ldots,n_1,\quad r,p=1,\ldots, n_2.
\end{align*}
Given two representations $\pi_1$ and $\pi_2$ of $G$,  one can construct \emph{the direct sum representation $\pi=\pi_1 \oplus \pi_2$ of $G$},  defined by $\pi(g):=\pi_1(g)\oplus \pi_2(g)$, for every $g\in G$. Moreover we will use the notation $\pi^{\oplus i}$ for the $i$-iterated direct sum of $\pi$ with itself.

A representation $\pi$ of $G$ on $V$ is said to be \textit{irreducible} if there is no proper invariant subspace $W\subset V$ under the action of $G$. In formulae, there is no $W$ such that $\pi(g)w\in W$ for any $w\in W$ and for any $g\in G$. It is well known that if $G$ has $m$ conjugacy classes, there exists a list of $m$ irreducible, pairwise  inequivalent, representations $\pi_0,\ldots,\pi_{m-1}$, that we call a \emph{complete system of irreducible representations of $G$}. For every representation $\pi$ of $G$ there exist $k_0,\ldots,k_{m-1} \in \mathbb N \cup \{0\}$ such that
\begin{equation}\label{deco}
\pi\sim \bigoplus_{i=0}^{m-1} \pi_i^{\oplus k_i}\quad \mbox{ or, equivalently, } \quad \chi_\pi=\sum_{i=0}^{m-1} k_i \chi_{\pi_i}.
\end{equation}
We say that $\pi$ \emph{contains} the irreducible representation $\pi_i$ if  $k_i\neq 0$. In this paper, every time we use the caption \emph{-$\pi$ irreducible-} under a sum, an union or an intersection, we mean that $\pi$ varies in a complete system of irreducible representations of $G$.

We will focus on some special representations of $G$. The first one is the trivial representation $\pi_0\colon G\to \mathbb C$, with $\pi_0(g)=1$ for every $g\in G$. It is unitary and irreducible and it is always contained in a complete system of unitary irreducible representations of $G$. \\
\indent When $G$ is finite, it naturally acts by left multiplication on itself. This action can be regarded as the action  on the vector space (which has in fact the structure of a group algebra) $\mathbb{C}G=\{\sum_{x\in G} c_x x: \ c_x\in \mathbb{C}\}$. This gives rise to the \emph{left regular representation $\lambda_G:G\to GL_{|G|}( \mathbb{C}G)$} which is faithful and has degree $|G|$. More precisely
$
\lambda_G(g)(\sum_{x\in G} c_x x)= \sum_{x\in G} c_{g^{-1}x} x.
$
Moreover $\lambda_G$ contains each irreducible representation $\pi_i$ of $G$ with multiplicity $\deg (\pi_i)$, and Eq. \eqref{deco} becomes:
\begin{equation}\label{regdec}
\lambda_G \sim \bigoplus_{i=0}^{m-1} \pi_i^{\oplus \deg(\pi_i)}\quad \mbox{ or, equivalently, } \quad \chi_{\lambda_G}=\sum_{i=0}^{m-1} \deg(\pi_i) \chi_{\pi_i}.
\end{equation}
When $G$ is  the symmetric group $Sym(n)$ on $n$ elements, the regular representation $\lambda_{Sym(n)}$ has degree $n!$. In this case, we have another faithful representation, generated by the permutational action $\varrho_{n}$ of $Sym(n)$ on a set of $n$ objects, called \emph{permutation representation}. This action can be easily studied by considering the action of $Sym(n)$ on the vector space $\mathbb{C}^n$ by permutating coordinates. If $\{{\bf e}_1, {\bf e}_2, \ldots, {\bf e}_n\}$ is the standard basis of $\mathbb{C}^n$, then the action of $\tau\in Sym(n)$ is defined as $\tau ({\bf e}_i) = {\bf e}_{\tau(i)}$ for each $i=1,\ldots, n$. The representation $\varrho_n$ decomposes into two irreducible representations: $\varrho_n=\pi_0\oplus \pi_S$, where $\pi_0$ is the trivial representation and $\pi_S$ is called the \emph{standard representation}, given by the action of $Sym(n)$ on the subspace $\{{\bf x}=(x_1,\ldots, x_n)\in \mathbb{C}^n : \sum_{i=1}^n x_i=0\}$. Notice that the permutation representation can be defined for any group $G$ acting on a set $X$ by embedding $G$ into $Sym(|X|)$.

We end this section by introducing some notation about spectrum. For a matrix $M\in M_n(\mathbb C)$, we denote by $\sigma(M) = \{\lambda \in \mathbb{C} : \det (M-\lambda I_k) =0\}$ the \emph{spectrum} of $M$. Notice that, as we are working in the complex field, such a spectrum always consists of $k$ (not necessarily distinct) values, called the\emph{ eigenvalues} of $M$. We denote by $\rho(M):=\max\{|\mu|:\mu\in \sigma(M)\}$ the \emph{spectral radius} of $M$. In particular, if $M$ is Hermitian, one has $\sigma(M) \subset \mathbb R$ and we denote by $\lambda_1(M):=\max\{\mu:\mu\in \sigma(M)\}$ the \emph{index} of $M$.

\section{Fourier Transforms}\label{fou}
Let $G$ be a group. Consider the vector space $C_{f}(G)=\{f\colon G\to \mathbb C: supp(f) \mbox{ is finite} \}$ of the finitely supported functions on $G$, where $supp(f)\subset G$ is the support of $f$, that is, $supp(f) = \{x\in G: f(x) \neq 0\}$. If we define the convolution product
$$
(f\ast h)(x):=\sum_{y\in G} f(xy^{-1})h(y),
$$
then $C_{f}(G)$ becomes an algebra, with involution $f^*(x):= \overline{f(x^{-1})}$, for each $x\in G$. Notice that $supp (f\ast h)\subset \bigcup_{y\in supp(h)}supp(f)y$, that is finite.

In addition, consider the group algebra $\mathbb C G$ of finite $\mathbb C$-linear combinations of elements of $G$. An element $f\in \mathbb C G$ can be expressed as $f=\sum_{x\in G} f_x x$,
where the set $\{x\in G: f_x\neq 0\}$ is finite. There is an involution $^*$ on $\mathbb CG$ such that $f^*:=\sum_{x\in G} \overline{f_x} x^{-1}$ and a product that is the linear extension of that  of $G$:
$$
\left(\sum_{x\in G} f_x x\right)\cdot \left(\sum_{y\in G} h_y y\right):= \sum_{x,y\in G} f_x h_y \, x y, \qquad \mbox{ for each } f,h\in \mathbb CG.
$$
It is well known that the map $$f\in  C_{f}(G) \mapsto \sum_{x\in G} f(x) x\in \mathbb C G $$ is an algebra isomorphism preserving the involutions, and then, for any $x\in G$,
we will use the notation $f(x)\in \mathbb C$ or $f_x\in \mathbb C$, equivalently.

\begin{definition}\label{deffou1}
Let $\pi\colon G\to GL_{k}(\mathbb C)$ be a representation of $G$. For $f\in C_{f}(G)$ the \emph{Fourier transform} of $f$ at $\pi$ is
\begin{eqnarray*}\label{defifourieralf}
\hat{f}(\pi):=\sum_{x\in G} f(x) \pi(x)\in M_{k}(\mathbb C).
\end{eqnarray*}
\end{definition}

\begin{example}\label{ex2}\rm
Let $G=Sym(3)$ and $\varrho_3$ be its permutation representation (see Table \ref{permrepS3}).      \small{
\begin{table}
\begin{tabular}{|c|c|c|c|c|c|}
\hline
$1_{Sym(3)}$ & $(12)$ & $(13)$ & $(23)$ & $(123)$ & $(132)$ \\
\hline
$\left(
   \begin{array}{ccc}
     1 & 0 & 0 \\
     0 & 1 & 0 \\
     0 & 0 & 1 \\
   \end{array}
 \right)$  & $\left(
   \begin{array}{ccc}
     0 & 1 & 0 \\
     1 & 0 & 0 \\
     0 & 0 & 1 \\
   \end{array}
 \right)$ & $\left(
   \begin{array}{ccc}
     0 & 0 & 1 \\
     0 & 1 & 0 \\
     1 & 0 & 0 \\
   \end{array}
 \right)$ & $\left(
   \begin{array}{ccc}
     1 & 0 & 0 \\
     0 & 0 & 1 \\
     0 & 1 & 0 \\
   \end{array}
    \right)$  & $\left(
   \begin{array}{ccc}
     0 & 0 & 1 \\
     1 & 0 & 0 \\
     0 & 1 & 0 \\
   \end{array}
 \right)$ & $\left(
   \begin{array}{ccc}
     0 & 1 & 0 \\
     0 & 0 & 1 \\
     1 & 0 & 0 \\
   \end{array}
 \right)$ \\
\hline
\end{tabular}
\smallskip
\caption{The permutation representation $\varrho_3$ of $Sym(3)$.} \label{permrepS3}
\end{table}}\normalsize
Let $f = 1_{Sym(3)}+i(12) +(1+i)(13) + 2i(23) -i(123) -(132)\in \mathbb{C}Sym(3)$. We have:
\begin{eqnarray*}
\widehat{f}(\varrho_3) &=& \varrho_3(1_{Sym(3)}) + i \varrho_3(12) + (1+i)\varrho_3(13) + 2i\varrho_3(23) - i\varrho_3(123) - \varrho_3(132)\\
&=& \left(
      \begin{array}{ccc}
        1+2i & i-1 & 1 \\
        0 & 2+i & 2i-1 \\
        i & i & 1+i \\
      \end{array}
    \right).
\end{eqnarray*}
\end{example}
The following facts are well known and easy to prove.
\begin{proposition}\label{prodotto}
For every $f,h\in C_{f}(G)$, for every  representation $\pi,\pi'$ of finite degree, one has:
\begin{itemize}
\item $\widehat{(f+h)}(\pi)=\hat{f}(\pi)+\hat{h}(\pi)$;
\item $\widehat{f\ast h}(\pi)=\hat{f}(\pi)\hat{h}(\pi)$;
\item if $\pi \sim \pi'$ then the matrices $\hat{f}(\pi)$ and $\hat{f}(\pi')$ are similar;
\item $\hat{f}(\pi\oplus\pi')=\hat{f}(\pi)\oplus \hat{f}(\pi')$.
\end{itemize}
Moreover, if $\pi$ is unitary, one has
$\widehat{f^*}(\pi)=\hat{f}(\pi)^*.$
\end{proposition}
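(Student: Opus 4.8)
The plan is to verify each assertion directly from Definition~\ref{deffou1}, using throughout that every function in sight has finite support, so all the sums that appear are finite and may be reordered or reindexed at will.

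Linearity is immediate: since $(f+h)(x)=f(x)+h(x)$ for every $x\in G$, we obtain $\widehat{(f+h)}(\pi)=\sum_{x\in G}(f(x)+h(x))\pi(x)=\sum_{x\in G}f(x)\pi(x)+\sum_{x\in G}h(x)\pi(x)=\hat f(\pi)+\hat h(\pi)$. For the multiplicativity property I would expand the convolution and reindex: starting from $(f\ast h)(x)=\sum_{y\in G}f(xy^{-1})h(y)$, interchanging the (finite) order of summation and substituting $z=xy^{-1}$, i.e. $x=zy$, gives
\begin{align*}
\widehat{f\ast h}(\pi)=\sum_{x\in G}\Big(\sum_{y\in G}f(xy^{-1})h(y)\Big)\pi(x)
=\sum_{y\in G}h(y)\sum_{z\in G}f(z)\,\pi(zy).
\end{align*}
Since $\pi$ is a group homomorphism, $\pi(zy)=\pi(z)\pi(y)$, so the right-hand side factors as $\big(\sum_{z\in G}f(z)\pi(z)\big)\big(\sum_{y\in G}h(y)\pi(y)\big)=\hat f(\pi)\hat h(\pi)$. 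The only point deserving attention is that the double sum ranges over the finite set $supp(f)\times supp(h)$, which legitimizes the rearrangement; this is essentially the sole genuine computation in the statement.

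For the similarity claim, if $\pi'(x)=S^{-1}\pi(x)S$ for all $x\in G$ with $S\in GL_k(\mathbb C)$, then $\hat f(\pi')=\sum_{x\in G}f(x)\,S^{-1}\pi(x)S=S^{-1}\big(\sum_{x\in G}f(x)\pi(x)\big)S=S^{-1}\hat f(\pi)S$, so the two transforms are conjugate by the same $S$. For the direct sum, $(\pi\oplus\pi')(x)=\pi(x)\oplus\pi'(x)$ is block diagonal with fixed block sizes, and a finite sum of such block diagonal matrices is again block diagonal with blocks the corresponding sums; hence $\hat f(\pi\oplus\pi')=\big(\sum_{x\in G}f(x)\pi(x)\big)\oplus\big(\sum_{x\in G}f(x)\pi'(x)\big)=\hat f(\pi)\oplus\hat f(\pi')$.

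Finally, for the involution, recall $f^*(x)=\overline{f(x^{-1})}$; substituting $y=x^{-1}$ yields $\widehat{f^*}(\pi)=\sum_{x\in G}\overline{f(x^{-1})}\,\pi(x)=\sum_{y\in G}\overline{f(y)}\,\pi(y^{-1})=\sum_{y\in G}\overline{f(y)}\,\pi(y)^{-1}$. When $\pi$ is unitary we have $\pi(y)^{-1}=\pi(y)^*$, and using that $M\mapsto M^*$ is conjugate-linear and additive, $\sum_{y\in G}\overline{f(y)}\,\pi(y)^*=\big(\sum_{y\in G}f(y)\pi(y)\big)^*=\hat f(\pi)^*$. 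I do not expect any real obstacle here: the whole proposition reduces to careful bookkeeping with finite sums together with the homomorphism property of $\pi$ (and, in the last item, with the fact that unitarity turns inverses into adjoints).
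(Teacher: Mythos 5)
Your verification is correct in every item: the reindexing $z=xy^{-1}$ together with the homomorphism property handles the convolution, and the remaining points are the routine linear-algebra checks you describe. The paper itself offers no proof (it declares these facts ``well known and easy to prove''), and your direct computation is exactly the standard argument being invoked, so there is nothing to compare beyond noting that you have filled in the omitted details correctly.
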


We are going to extend the previous notions to a matrix setting. Similar constructions can be found in several different contexts, from group rings to operator rings \cite{Chu, dalfo, Hurley, g-circ}.

Consider the algebra $C_{f}(G, M_n(\mathbb C))$ of the finitely supported functions from $G$ to $M_n(\mathbb C)$ endowed with the convolution product $(F\ast H)(x):=\sum_{y\in G} F(xy^{-1})H(y)$ and the involution $F^*(x):=F(x^{-1})^*$, where the $^*$ on the right is the Hermitian transpose in $M_n(\mathbb C)$.
Moreover, consider the algebra $M_n(\mathbb C G)$ over $\mathbb C$. An element  $F \in M_n(\mathbb C G)$ is a square matrix of size $n$ whose entry $F_{i,j}$ is an element of $\mathbb C G$. The product is $(F H)_{i,j}=\sum_{k=1}^n F_{i,k} H_{k,j},$
where $F_{i,k} H_{k,j}$ is the product in the group algebra $\mathbb C G$, and the involution $^*$ in $M_n(\mathbb C G)$ is defined by $(F^*)_{i,j}=(F_{j,i})^*$, where the $^*$ on the right is the involution in $\mathbb C G$.
\begin{proposition}\label{isomorfismo}
The algebras  $C_{f}(G, M_n(\mathbb C))$ and $M_n(\mathbb C G)$ are canonically isomorphic.
\end{proposition}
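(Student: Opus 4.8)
The plan is to write down the obvious candidate isomorphism and then check that it transports all the algebraic structure. I would define $\Phi\colon C_f(G,M_n(\mathbb C))\to M_n(\mathbb C G)$ by sending a finitely supported $F\colon G\to M_n(\mathbb C)$ to the matrix with entries
\[
\Phi(F)_{i,j}:=\sum_{x\in G}F(x)_{i,j}\,x\in\mathbb C G;
\]
this is a finite sum precisely because $F$ has finite support. In the other direction, given $M\in M_n(\mathbb C G)$ with $M_{i,j}=\sum_{x\in G}(M_{i,j})_x\,x$, I would let $\Psi(M)(x)$ be the complex matrix whose $(i,j)$ entry is $(M_{i,j})_x$; since the finitely many entries $M_{i,j}$ are finitely supported in $\mathbb C G$, the function $\Psi(M)$ is finitely supported. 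It is immediate that $\Psi=\Phi^{-1}$ and that both maps are $\mathbb C$-linear, so bijectivity and linearity come for free; one also checks at once that $\Phi$ sends the identity of $C_f(G,M_n(\mathbb C))$ (the function equal to $I_n$ at $1_G$ and to $0$ elsewhere) to the identity of $M_n(\mathbb C G)$.

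Next I would verify compatibility with the products. With $M=\Phi(F)$ and $N=\Phi(H)$, the product in $M_n(\mathbb C G)$ expands, using the group-algebra multiplication, as
\[
(MN)_{i,j}=\sum_{k=1}^n M_{i,k}N_{k,j}=\sum_{k=1}^n\sum_{x,y\in G}F(x)_{i,k}\,H(y)_{k,j}\;xy,
\]
so the coefficient of a fixed $z\in G$ in $(MN)_{i,j}$ equals $\sum_{k=1}^n\sum_{y\in G}F(zy^{-1})_{i,k}H(y)_{k,j}$ after the reindexing $x=zy^{-1}$. By the definition of matrix multiplication in $M_n(\mathbb C)$ this is exactly the $(i,j)$ entry of $(F\ast H)(z)=\sum_{y\in G}F(zy^{-1})H(y)$, so $\Phi(F\ast H)=\Phi(F)\,\Phi(H)$.

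Finally, for the involutions, with $M=\Phi(F)$ one computes
\[
(M^*)_{i,j}=(M_{j,i})^*=\sum_{x\in G}\overline{F(x)_{j,i}}\;x^{-1}=\sum_{x\in G}\overline{F(x^{-1})_{j,i}}\;x,
\]
the last step being the substitution $x\mapsto x^{-1}$; the coefficient of $x$ here is $\overline{F(x^{-1})_{j,i}}$, which is precisely the $(i,j)$ entry of $F(x^{-1})^*=F^*(x)$. Hence $\Phi(F^*)=\Phi(F)^*$, and $\Phi$ is an isomorphism of $\ast$-algebras. The only step requiring genuine care is the product computation, where one must keep the matrix indices $i,k,j$ separate from the group variables $x,y,z$ and reindex correctly; everything else is bookkeeping. (Equivalently, and perhaps more transparently, one may factor $\Phi$ through the evident algebra isomorphism $C_f(G,M_n(\mathbb C))\cong M_n(C_f(G))$ sending $F$ to the matrix of its entrywise functions $x\mapsto F(x)_{i,j}$, followed by the entrywise application of the algebra isomorphism $C_f(G)\cong\mathbb C G$ recalled above; this reduces the whole verification to the already known scalar case.)
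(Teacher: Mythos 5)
Your map $\Phi$ is exactly the $\varphi$ used in the paper, and your verifications of multiplicativity (via the reindexing $x=zy^{-1}$ in the coefficient of $z$) and of compatibility with the involutions coincide with the paper's computations. The proposal is correct and follows essentially the same route; the closing remark about factoring through $M_n(C_f(G))$ is a nice extra but not needed.
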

\begin{proof}
Let us define the map $\varphi \colon C_{f}(G, M_n(\mathbb C)) \to M_n(\mathbb C G)$ such that
$$
\varphi(F)_{i,j}:=\sum_{x\in G} F(x)_{i,j} x\in \mathbb C G
$$
for each $F\in C_{f}(G, M_n(\mathbb C))$, so that the coefficient multiplying $x$ in $\varphi(F)_{i,j}$ is
\begin{equation}\label{ug}
\varphi(F)_{i,j}(x)=  F(x)_{i,j}.
\end{equation}
Notice that, for $H\in M_n(\mathbb C G)$, we have $H_{i,j}\in \mathbb C G$ and   $\varphi^{-1}(H)\in C_{f}(G, M_n(\mathbb C))$ is such that $(\varphi^{-1}(H)(x))_{i,j}= H_{i,j}(x)$, for each $x\in G$.
For $F,H\in C_{f}(G, M_n(\mathbb C))$, for every $x\in G$ and $i,j=1,\ldots,n$, by using Eq. \eqref{ug} we get
\begin{eqnarray*}
\varphi \left(F\ast H\right)_{i,j}(x)\!&=&\! \left((F\!\ast\! H)(x)\right)_{i,j}\!  = \! \sum_{y\in G} \left(F(xy^{-1})H(y)\right)_{i,j}\! =\!
   \sum_{y\in G} \sum_{k=1}^n F(xy^{-1})_{i,k} H(y)_{k,j};  \\
 \left({\varphi (F) \varphi(H)}\right)_{i,j}(x) &=&
  \sum _{k=1}^n \left({\varphi(F)}_{i,k} {\varphi(H)}_{k,j} \right) (x)=
 \sum _{k=1}^n \sum_{y\in G }{\varphi(F)}_{i,k}(xy^{-1}) {\varphi(H)}_{k,j}(y)\\&=& \sum_{y\in G} \sum_{k=1}^n F(xy^{-1})_{i,k} H(y)_{k,j},
\end{eqnarray*}
so that $\varphi(F\ast H)=\varphi(F)\varphi(H)$.
Moreover:
\begin{align*}
\varphi(F^*)_{i,j}(x)&=  F^*(x)_{i,j}=\left(F(x^{-1})^*\right)_{i,j}= \overline{F(x^{-1})_{j,i}}=\overline{\varphi(F)_{j,i}(x^{-1}) }= (\varphi(F)_{j,i})^*(x)\\&=(\varphi(F)^*)_{i,j}(x).
\end{align*}

\end{proof}

\begin{example}\label{ex3} \rm
Consider the function $F\in C_{f}(Sym(3), M_2(\mathbb{C}))$  defined as:
$$
F(1_{Sym(3)}) =\left(
   \begin{array}{cc}
     1 & 0 \\
    0 & i \\
    \end{array}
 \right); \ F((12)) =\left(
   \begin{array}{cc}
     0 & -i \\
    1 & i \\
    \end{array}
 \right); \ F((13))= \left(
   \begin{array}{cc}
     2i & 0 \\
    0 & 1 \\
    \end{array}
 \right);
$$
$$
F((23))= \left(
   \begin{array}{cc}
     0 & 0 \\
     0 & 0 \\
    \end{array}
 \right); \ F((123))= \left(
   \begin{array}{cc}
     -i & 0 \\
    2 & 3i \\
    \end{array}
     \right); \ F((132))= \left(
   \begin{array}{cc}
     i & -1 \\
    0 & 1 \\
    \end{array}
 \right).
$$
The corresponding element $\varphi(F)$  in $M_2(\mathbb{C}Sym(3))$ is
\begin{eqnarray*}
 \left(
                                                                                       \begin{array}{cc}
                                                                                         F_{11} & F_{12} \\
                                                                                         F_{21} & F_{22} \\
                                                                                       \end{array}
                                                                                     \right)
= \scriptsize\left(
      \begin{array}{c|c}
        1_{Sym(3)} + 2i(13) -i(123) +i(132) & -i(12) -(132) \\    \hline
        (12)   +2(123) & i\ 1_{Sym(3)} +i(12) +(13) +3i(123) +(132) \\
      \end{array}
    \right).   \normalsize
\end{eqnarray*}
\end{example}
From now on, we do not distinguish these two algebras; more precisely,  for every $F\in C_{f}(G,M_n(\mathbb C))$, or equivalently $F\in M_n(\mathbb C G)$,
we will speak about $F(x)$  in $M_n(\mathbb C)$ and about  $F_{i,j}\in \mathbb C G$, so that $F(x)_{i,j}=F_{i,j}(x)\in \mathbb C$.

We can define on $M_n(\mathbb C G)$ an analogue of the Fourier transform of Definition \ref{deffou1}.

\begin{definition}\label{fouriermatrici}
Let $F\in M_n(\mathbb CG)$ and let $\pi$ be a representation of $G$ of degree $k$. The \emph{ Fourier transform of $F$ at $\pi$} is $$\widehat{F}(\pi):=\sum_{x\in G} F(x)\otimes \pi(x)\in M_{nk}(\mathbb C).$$
\end{definition}

The next proposition shows that the matrix  $\widehat{F}(\pi)$ is the matrix obtained from $F\in  M_n(\mathbb C G)$ by replacement of each element $x\in G$ with the square block    $\pi(x)$ of size $k$, and each $0\in \mathbb C G$ with a zero block of size $k$.
\begin{proposition}\label{blocchi}
The matrix $\widehat{F}(\pi)$ is a block matrix of  $n\times n$ blocks, whose block $\widehat{F}(\pi)_{i,j}$ has size $k$ and is of type $\widehat{F}(\pi)_{i,j}=\widehat{F_{i,j}} (\pi)$ for $i,j=1,\ldots,n$, where $\widehat{F_{i,j}} (\pi)$ is the  Fourier transform of $F_{i,j}\in \mathbb C G$ as in Definition \ref{deffou1}.
\end{proposition}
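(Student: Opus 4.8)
The plan is to unwind both definitions and verify that the indices match up; nothing deeper is required. First I would recall the indexing convention for the Kronecker product fixed in Section~2: for $A\in M_{n}(\mathbb C)$ and $B\in M_{k}(\mathbb C)$ one has $(A\otimes B)_{k(i-1)+r,\,k(j-1)+p}=A_{i,j}\,B_{r,p}$ for $i,j=1,\ldots,n$ and $r,p=1,\ldots,k$. In other words, the rows $k(i-1)+1,\ldots,ki$ and the columns $k(j-1)+1,\ldots,kj$ of $A\otimes B$ cut out a $k\times k$ submatrix, the $(i,j)$-block, which is precisely the scalar multiple $A_{i,j}\,B$. Applying this with $A=F(x)$ (of size $n$) and $B=\pi(x)$ (of size $k$), the matrix $F(x)\otimes\pi(x)\in M_{nk}(\mathbb C)$ is a block matrix consisting of $n\times n$ blocks of size $k$, whose $(i,j)$-block equals $F(x)_{i,j}\,\pi(x)$, with $F(x)_{i,j}\in\mathbb C$.

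Next I would sum over $x\in G$. Since $F\in C_{f}(G,M_n(\mathbb C))$ is finitely supported, $\widehat F(\pi)=\sum_{x\in G}F(x)\otimes\pi(x)$ is a finite sum, hence itself an $n\times n$ block matrix with blocks of size $k$; and because passing to the $(i,j)$-block is additive, one obtains $\widehat F(\pi)_{i,j}=\sum_{x\in G}F(x)_{i,j}\,\pi(x)$. Finally I would invoke the identification recorded just before the statement, namely $F(x)_{i,j}=F_{i,j}(x)\in\mathbb C$ where $F_{i,j}\in\mathbb C G$. Substituting gives $\widehat F(\pi)_{i,j}=\sum_{x\in G}F_{i,j}(x)\,\pi(x)$, which is exactly $\widehat{F_{i,j}}(\pi)$ in the sense of Definition~\ref{deffou1}. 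This is the claimed identity.

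I do not expect a genuine obstacle: the proposition is in essence a dictionary entry translating between the two descriptions of $M_n(\mathbb C G)\cong C_{f}(G,M_n(\mathbb C))$ and between the two Fourier transforms of Definitions~\ref{deffou1} and \ref{fouriermatrici}. The only point that needs a little care is the bookkeeping of indices, i.e.\ checking that the labeling $k(i-1)+r$ used for the Kronecker product agrees with the intended partition of $\{1,\ldots,nk\}$ into $n$ consecutive runs of length $k$, together with the (routine) observation that the finite support of $F$ is exactly what makes all the sums appearing above finite and the rearrangements legitimate.
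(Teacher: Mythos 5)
Your proposal is correct and follows essentially the same route as the paper: the paper's proof is exactly the entrywise computation $\widehat{F}(\pi)_{k(i-1)+r,\,k(j-1)+p}=\sum_{x\in G}F(x)_{i,j}\,\pi(x)_{r,p}=(\widehat{F_{i,j}}(\pi))_{r,p}$, using the Kronecker indexing convention and the identification $F(x)_{i,j}=F_{i,j}(x)$. No differences worth noting.
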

\begin{proof}
For all $i,j=1,\ldots, n$ and $r,p=1,\ldots, k,$ we have:
\begin{eqnarray*}
\widehat{F}(\pi)_{k(i-1)+r,k(j-1)+p}\! \!&=&\!\! \!\left(\sum_{x\in G} F(x)\otimes \pi(x)\right)_{k(i-1)+r,k(j-1)+p}\\
\!&=&\! \sum_{x\in G} (F(x)\otimes \pi(x) )_{k(i-1)+r,k(j-1)+p}\! =\!
\sum_{x\in G} F(x)_{i,j} \pi(x)_{r,p}\!=\!
(\widehat{F_{i,j}}(\pi))_{r,p}.
\end{eqnarray*}
\end{proof}

\begin{example}\rm
Consider again the group $Sym(3)$, and its permutation representation $\varrho_3$ of Table \ref{permrepS3}. Let $F\in M_2(\mathbb C Sym(3))$ be as in Example \ref{ex3}.
Then we have:
\begin{eqnarray*}
\widehat{F}(\varrho_3) &=& \sum_{x\in Sym(3)}F(x) \otimes \varrho_3(x)
   =\left( \begin{array}{cc}
                                                                                         \widehat{F_{11}}(\varrho_3) & \widehat{F_{12}}(\varrho_3) \\
                                                                                         \widehat{F_{21}}(\varrho_3) & \widehat{F_{22}}(\varrho_3) \\
                                                                                       \end{array}
                                                                                     \right) \\
&=&\tiny  \left(
      \begin{array}{c|c}
        \varrho_3(1_{Sym(3)}) + 2i\varrho_3(13) -i\varrho_3(123) +i\varrho_3(132) & -i\varrho_3(12) -\varrho_3(132) \\    \hline
        \varrho_3(12)  +2\varrho_3(123) & \varrho_3(1_{Sym(3)}) +i\varrho_3(12) +\varrho_3(13)  +3i\varrho_3(123) +\varrho_3(132) \\
      \end{array}
    \right) \\   \normalsize
    &=& \left(
          \begin{array}{ccc|ccc}
            1 & i & i & 0 & -1-i & 0 \\
            -i & 1+2i & i & -i & 0 & -1 \\
            3i & -i & 1 & -1 & 0 & -i \\ \hline
            0 & 1 & 2 & i & 1+i & 1+3i \\
            3 & 0 & 0 & 4i & 1+i & 1 \\
            0 & 2 & 1 & 2 & 3i & 2i \\
          \end{array}
        \right).
\end{eqnarray*}
\end{example}
The following proposition is the matrix analogue of Proposition \ref{prodotto}.
\begin{proposition}\label{productfou}
For every $F,H\in M_n(\mathbb C G)$, for every  representation $\pi,\pi'$ of finite degree, we have
\begin{itemize}
\item $\widehat{(F+H)}(\pi)=\hat{F}(\pi)+\hat{H}(\pi)$;
\item $\widehat{F H}(\pi)=\hat{F}(\pi)\hat{H}(\pi)$;
\item if $\pi \sim \pi'$ then the matrices $\hat{F}(\pi)$ and $\hat{F}(\pi')$ are similar;
\item $\hat{F}(\pi\oplus\pi')$ is similar to $\hat{F}(\pi)\oplus \hat{F}(\pi')$.
\end{itemize}
Moreover, if $\pi$ is unitary, we have
$\widehat{F^*}(\pi)=\hat{F}(\pi)^*.$
\end{proposition}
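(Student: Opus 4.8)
The plan is to prove each bullet point separately, reducing everything to the already-established Proposition~\ref{prodotto} for the scalar Fourier transform on $\mathbb{C}G$, together with Proposition~\ref{blocchi}, which identifies $\widehat{F}(\pi)$ as the block matrix with blocks $\widehat{F_{i,j}}(\pi)$. The first bullet is immediate: the Fourier transform $F\mapsto\widehat{F}(\pi)=\sum_{x\in G}F(x)\otimes\pi(x)$ is visibly $\mathbb{C}$-linear in $F$, since $(F+H)(x)=F(x)+H(x)$ and $\otimes$ distributes over matrix addition in the left factor. For the second bullet, the cleanest route is to use the isomorphism $C_f(G,M_n(\mathbb{C}))\cong M_n(\mathbb{C}G)$ from Proposition~\ref{isomorfismo}, so that the product $FH$ in $M_n(\mathbb{C}G)$ corresponds to the convolution $F\ast H$; then compute
\[
\widehat{F\ast H}(\pi)=\sum_{x\in G}(F\ast H)(x)\otimes\pi(x)=\sum_{x,y\in G}F(xy^{-1})H(y)\otimes\pi(xy^{-1})\pi(y),
\]
and use the mixed-product property $(AB)\otimes(CD)=(A\otimes C)(B\otimes D)$ of the Kronecker product together with the substitution $z=xy^{-1}$ to factor the double sum as $\left(\sum_{z}F(z)\otimes\pi(z)\right)\left(\sum_{y}H(y)\otimes\pi(y)\right)=\widehat{F}(\pi)\widehat{H}(\pi)$. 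One must be mildly careful that $\pi$ is a homomorphism so $\pi(xy^{-1})\pi(y)=\pi(x)$, which is exactly what makes the reindexing work.

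For the third bullet, if $\pi'\sim\pi$ then there is $S\in GL_k(\mathbb{C})$ with $\pi'(x)=S^{-1}\pi(x)S$ for all $x$; then $\widehat{F}(\pi')=\sum_x F(x)\otimes(S^{-1}\pi(x)S)=\sum_x (I_n\otimes S^{-1})(F(x)\otimes\pi(x))(I_n\otimes S)=(I_n\otimes S)^{-1}\widehat{F}(\pi)(I_n\otimes S)$, so the two matrices are conjugate by the invertible matrix $I_n\otimes S$. For the fourth bullet, $\widehat{F}(\pi\oplus\pi')=\sum_x F(x)\otimes(\pi(x)\oplus\pi'(x))$; one checks that $A\otimes(B\oplus C)$ is \emph{permutation-similar} to $(A\otimes B)\oplus(A\otimes C)$ via a fixed permutation matrix $P$ depending only on $n$, $\deg(\pi)$, $\deg(\pi')$ (it regroups the basis so that the $\pi$-part and the $\pi'$-part of each block are collected together). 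Since $P$ does not depend on $x$, it pulls out of the sum and conjugates $\widehat{F}(\pi\oplus\pi')$ to $\widehat{F}(\pi)\oplus\widehat{F}(\pi')$. Alternatively, one can invoke Proposition~\ref{blocchi} and apply the scalar statement $\widehat{f}(\pi\oplus\pi')=\widehat{f}(\pi)\oplus\widehat{f}(\pi')$ blockwise, then rearrange blocks by the same permutation; this is why the matrix version only gives similarity, not equality, whereas the scalar version of Proposition~\ref{prodotto} gives equality.

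For the last assertion, suppose $\pi$ is unitary, so $\pi(x^{-1})=\pi(x)^{-1}=\pi(x)^*$. Using the definition of the involution, $(F^*)(x)=F(x^{-1})^*$ in $C_f(G,M_n(\mathbb{C}))$, compute
\[
\widehat{F^*}(\pi)=\sum_{x\in G}F(x^{-1})^*\otimes\pi(x)=\sum_{y\in G}F(y)^*\otimes\pi(y^{-1})=\sum_{y\in G}F(y)^*\otimes\pi(y)^*=\left(\sum_{y\in G}F(y)\otimes\pi(y)\right)^*,
\]
where the last step uses $(A\otimes B)^*=A^*\otimes B^*$, and the right-hand side is $\widehat{F}(\pi)^*$. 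I do not expect any genuine obstacle here; the only point requiring a moment's thought is the fourth bullet, where the passage from the equality in the scalar case to mere similarity in the matrix case has to be handled with an explicit, $x$-independent permutation matrix. Everything else is a direct manipulation of Kronecker products, and one could even shorten the write-up by saying "the proof is entirely analogous to that of Proposition~\ref{prodotto}, using Proposition~\ref{blocchi}," but I would spell out at least the product and the involution identities for completeness.
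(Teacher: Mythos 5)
Your proof is correct, but it takes a partly different route from the paper's. For the product and the involution identities, the paper reduces everything blockwise: it invokes Proposition~\ref{blocchi} to identify the $(i,j)$ block of $\widehat{F}(\pi)$ with $\widehat{F_{i,j}}(\pi)$ and then applies the scalar Proposition~\ref{prodotto} entrywise, whereas you compute directly with the definition $\widehat{F}(\pi)=\sum_x F(x)\otimes\pi(x)$ on the convolution side, using the mixed-product property $(A\otimes C)(B\otimes D)=(AB)\otimes(CD)$ and the reindexing $z=xy^{-1}$ (and, for the involution, $(A\otimes B)^*=A^*\otimes B^*$ with $y=x^{-1}$). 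Both arguments are valid; yours is more self-contained at those steps and makes the role of the homomorphism property of $\pi$ explicit, while the paper's makes the logical dependence on the already-proved scalar case transparent and uniform across all five items. For the equivalence bullet you use exactly the paper's conjugating matrix $I_n\otimes S$. For the direct-sum bullet the paper writes down the interleaving permutation $\gamma\in Sym(n(k+k'))$ explicitly; you only assert that a suitable $x$-independent permutation matrix exists, but you correctly identify the one point that matters, namely that the permutation depends only on $n$, $\deg(\pi)$, $\deg(\pi')$ and hence pulls out of the sum over $G$ --- this is also why similarity, rather than equality, is all one gets in the matrix case. No gaps.
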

\begin{proof}
By virtue of Proposition \ref{blocchi} and taking into account Proposition \ref{prodotto}, we have:
\begin{align*}
&\widehat{F+H}(\pi)_{i,j}=\widehat{(F_{i,j}+H_{i,j})}(\pi)=\widehat{F}(\pi)_{i,j}+\widehat{H}(\pi)_{i,j}=(\widehat{F}(\pi)+\widehat{H}(\pi))_{i,j};\\
&\widehat{FH}(\pi)_{i,j}=\widehat{(FH)_{i,j}}(\pi)= \sum_{k=1}^n \widehat{F_{i,k} H_{k,j}}(\pi)=  \sum_{k=1}^n \widehat{F_{i,k}}(\pi)\widehat{H_{k,j}}(\pi)=\left(\widehat{F}(\pi)\widehat{H}(\pi)\right)_{i,j}.\end{align*}
If $\pi$ is unitary we have  $\widehat{F^*}(\pi)_{i,j}= \widehat{(F_{i,j})^*}(\pi)=\widehat{F_{i,j}}(\pi)^*=  \widehat{F}(\pi)^*_{i,j}.$
\\ Suppose  $\pi \sim \pi'$ so that there exists an invertible matrix $S$ of size $k$ such that $S^{-1}\pi(x)S=\pi'(x)$ for all $x\in G$. It is easy to prove that
$(I_n\otimes S)^{-1} \hat{F}(\pi) (I_n\otimes S)=\hat{F}(\pi')$.
\\ Now let $\deg (\pi) = k$ and $\deg (\pi')=k'$. Any integer $m\in\{1,2,\ldots, n(k+k')\}$ can be written as $m=l(k+k')+q$, with $l\in\{0,\ldots, n-1\}$ and $q\in\{1,\ldots, k+k'\}$. Let us define $\gamma \in Sym(n(k+k'))$ such that
$$\gamma (l(k+k')+q)=\begin{cases}
lk+q \quad &\mbox{ if } q\leq k\\
(n-1)k+lk'+q \quad &\mbox{ if } q>k.
\end{cases}$$
 Let $P_\gamma \in M_{n(k+k')}(\mathbb C)$ be the  permutation matrix associated with $\gamma$. By an explicit computation one obtains
$$
P_\gamma^{-1} \hat{F}(\pi\oplus\pi') P_\gamma =\hat{F}(\pi)\oplus \hat{F}(\pi').
$$
\end{proof}

\subsection{$G$-block circulant matrices}\label{circulant}
As an application of Proposition \ref{productfou}, we can compute the spectrum $\sigma(\mathcal M)$ of a \emph{$G$-block circulant matrix $\mathcal M$} of size $nk$, for a finite group $G$ of order $k$.
Similar spectral computations have been performed in \cite{g-circ}, generalizing the results of \cite{tee} about cyclic-block circulant matrices, and of \cite{ziz} about $G$-circulant matrices. However, we devote the present section to this subject because of its relation with \emph{cover graphs} (which will be studied in Section \ref{cover} in the setting of gain graphs) and since it has a very natural interpretation in our context.  \\
Consider a block matrix $\mathcal M:=\left(
                    \begin{array}{c|c|c|c}
                       \mathcal M_{1,1} & \mathcal M_{1,2} & \cdots & \mathcal M_{1,k} \\\hline
                       \mathcal M_{2,1} & \mathcal M_{2,2} & \cdots & \mathcal M_{2,k} \\   \hline
                         \vdots & \vdots & \ddots  & \vdots \\ \hline
                       \mathcal M_{k,1} & \mathcal M_{k,2} & \cdots & \mathcal M_{k,k} \ \end{array}
                  \right) \in M_{nk}(\mathbb C)$, where $\mathcal M_{r,p}$ is a block of size $n$.

The matrix $\mathcal M$ is $G$-block circulant if, for a fixed order of $G=\{g_1,g_2,\ldots,g_k\}$, anytime $g_{r}g_{p}^{-1}=g_{r'} g_{p'}^{-1}$ in $G$, we have $\mathcal M_{r,p}=\mathcal M_{r',p'}$. Let us fix $g_1=1_G$, and notice that we have at most $k$ distinct blocks in $\mathcal M$, each one appearing once in every row and every column. Observe that, if $G$ is the cyclic group of order $k$, one obtains exactly the classical definition of a \emph{block circulant matrix} (see, for instance, \cite{tee}).

With a given $G$-block circulant matrix, one can associate an element $M\in M_n(\mathbb CG)$ (or equivalently, $M\in C_{f}(G,M_n(\mathbb C))$) such that
\begin{equation}\label{M}
M(g_r):= \mathcal M_{r,1}.
\end{equation}
Notice that, by virtue of the $G$-block circulant property of $\mathcal M$, we have $M(g_r g_p^{-1}) =\mathcal M_{r,p}\in M_n(\mathbb C)$ for every $r,p =1,\ldots, k$.

\begin{proposition}\label{prop5}
Let $G$ be a finite group of order $k$ and $\mathcal M$ be a $G$-block circulant matrix. Then there exists a permutation matrix $P$ such that $P^{-1} \mathcal M P=\hat{M}(\lambda_G)$, where $\hat{M}(\lambda_G)$ is the Fourier transform of  $M\in M_n(\mathbb CG)$, as defined in Eq. \eqref{M},  at the left regular representation of $G$. Moreover
$$
\sigma(\mathcal M)=\bigsqcup_{\pi \mbox{ \tiny irreducible} } \deg(\pi) \sigma(\hat{M}(\pi)).
$$
\end{proposition}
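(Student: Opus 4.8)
The plan is to prove the two assertions separately, the first being purely combinatorial bookkeeping and the second a direct application of Proposition~\ref{productfou} together with the decomposition \eqref{regdec} of the regular representation.

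First I would establish the conjugation $P^{-1}\mathcal M P = \widehat M(\lambda_G)$. Recall that by Proposition~\ref{blocchi}, $\widehat M(\lambda_G)$ is the block matrix whose $(i,j)$ block is $\widehat{M_{i,j}}(\lambda_G) = \sum_{x\in G} M(x)_{i,j}\,\lambda_G(x) \in M_k(\mathbb C)$. Since $\lambda_G(x)$ is the permutation matrix of left multiplication by $x$ on $G=\{g_1,\dots,g_k\}$, its $(r,p)$ entry is $1$ precisely when $g_r = x g_p$, i.e. $x = g_r g_p^{-1}$; hence $(\widehat{M_{i,j}}(\lambda_G))_{r,p} = M(g_r g_p^{-1})_{i,j} = (\mathcal M_{r,p})_{i,j}$, using \eqref{M} and the $G$-block circulant property. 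So $\widehat M(\lambda_G)$ and $\mathcal M$ contain exactly the same $nk$ scalar entries, but indexed differently: $\mathcal M$ is $k\times k$ blocks of size $n$, while $\widehat M(\lambda_G)$ is $n\times n$ blocks of size $k$. The permutation $P$ is therefore the ``block transpose'' reindexing sending position $(n(i-1)+r,\, n(j-1)+p)$ in $\mathcal M$ to position $(k(i-1)+r',\, k(j-1)+p')$ in $\widehat M(\lambda_G)$ — concretely, $P$ is the permutation matrix of the bijection $\{1,\dots,nk\}\to\{1,\dots,nk\}$ that matches the index pair $(i,r)$ under lexicographic ordering $(r,i)$ with the pair under ordering $(i,r)$. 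One checks directly that with this $P$ one has $(P^{-1}\mathcal M P)_{k(i-1)+r,\,k(j-1)+p} = \mathcal M_{n(\text{\tiny…})}$, matching $\widehat M(\lambda_G)$ entrywise. This is routine index-chasing and I would present only the definition of $P$ and the one-line verification of a generic entry.

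For the spectral decomposition, similar matrices are cospectral, so $\sigma(\mathcal M) = \sigma(\widehat M(\lambda_G))$. Now apply the decomposition \eqref{regdec}: $\lambda_G \sim \bigoplus_{\pi\text{ irreducible}} \pi^{\oplus \deg(\pi)}$. By the third and fourth bullets of Proposition~\ref{productfou}, $\widehat M(\lambda_G)$ is similar to $\widehat M\bigl(\bigoplus_\pi \pi^{\oplus\deg(\pi)}\bigr)$, which in turn is similar to $\bigoplus_\pi \widehat M(\pi)^{\oplus\deg(\pi)}$ (iterating the ``$\widehat F(\pi\oplus\pi')$ is similar to $\widehat F(\pi)\oplus\widehat F(\pi')$'' property, valid for finite degree since $G$ is finite). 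The spectrum of a direct sum is the disjoint union (with multiplicity) of the spectra of the summands, so
$$\sigma(\mathcal M) = \sigma(\widehat M(\lambda_G)) = \bigsqcup_{\pi\text{ irreducible}} \deg(\pi)\,\sigma(\widehat M(\pi)),$$
where $\deg(\pi)\,\sigma(\widehat M(\pi))$ denotes $\deg(\pi)$ copies of the multiset $\sigma(\widehat M(\pi))$, and the dimension count $\sum_\pi \deg(\pi)\cdot n\deg(\pi) = n\sum_\pi \deg(\pi)^2 = nk$ confirms all $nk$ eigenvalues are accounted for.

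The only genuinely delicate point is writing down the permutation $P$ correctly and verifying it conjugates $\mathcal M$ to $\widehat M(\lambda_G)$ rather than to some other reindexing; everything after that is a formal consequence of Proposition~\ref{productfou} and \eqref{regdec}. I would state $P$ explicitly as the permutation matrix of $\gamma\in Sym(nk)$ with $\gamma(k(i-1)+r) = n(r-1)+i$ (for $i=1,\dots,n$, $r=1,\dots,k$), exhibit the generic entry computation, and then invoke the earlier propositions for the rest.
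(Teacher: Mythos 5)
Your proposal is correct and follows essentially the same route as the paper: the same entrywise computation showing $\hat{M}(\lambda_G)_{k(i-1)+r,\,k(j-1)+p}=M(g_rg_p^{-1})_{i,j}=\mathcal M_{n(r-1)+i,\,n(p-1)+j}$, the same perfect-shuffle permutation $\gamma(k(i-1)+r)=n(r-1)+i$, and the same appeal to Eq.~\eqref{regdec} together with Proposition~\ref{productfou} for the spectral decomposition. No substantive differences.
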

\begin{proof}
First observe that
$\lambda_G(g)_{r,p}=
\begin{cases}
1 \quad &\mbox{if } g=g_r g_p^{-1}\\
0 \quad &\mbox{otherwise}.\end{cases}$ This implies that, for all $i,j=1,\ldots, n$, and all $r,p=1,\ldots, k$, one gets that
$ M(g)_{i,j} \lambda_G(g)_{r,p}$ is non-zero only if $g=g_r g_p^{-1}$.
From this,
\begin{eqnarray*}
\hat{M}(\lambda_G)_{k(i-1)+r,k(j-1)+p}&=&\sum_{g\in G} (M(g)\otimes \lambda_G(g))_{k(i-1)+r,k(j-1)+p}=\sum_{g\in G} M(g)_{i,j} \lambda_G(g)_{r,p}\\
&=& M(g_r g_p^{-1})_{i,j}=(\mathcal M_{r,p})_{i,j}=\mathcal M_{n(r-1)+i,n(p-1)+j}.
\end{eqnarray*}
The matrix $P$ is the matrix associated with the permutation $p\in Sym(nk)$ such that $p(k(i-1)+r)=n(r-1)+i$, that is, a \emph{perfect shuffle $n\times k$}, whose conjugation action changes the order of the matrices in a  Kronecker product (see for example \cite{shu}).
The second statement follows from the decomposition of Eq. \eqref{regdec} and from Proposition \ref{productfou}.
\end{proof}
Notice that, if $G$ is Abelian, then all its irreducible representations have degree $1$ and indeed coincide with their characters. In this case, the spectrum of $\mathcal M$ is the disjoint union of the spectra of $k$ matrices of size $n$, each obtained by linear combination of the blocks of $\mathcal M$ with coefficients given by the irreducible characters:
$$
\sigma(\mathcal M)=\bigsqcup_{\chi \mbox{ \tiny irreducible} }\sigma\left( \sum_{r=1}^k\chi(g_r) \mathcal M_{r,1} \right).
$$
In particular, if $G$ is the cyclic group $G=\{1,\xi,\xi^2, \ldots, \xi^{k-1}: \ \xi= e^\frac{2\pi i}{k}\}$ given by the $k$-th roots of the unity, then by using the classification of the characters of cyclic groups, one obtains
$$
\sigma(\mathcal M)=\bigsqcup_{l=0}^{k-1} \sigma\left( \sum_{r=1}^k   \xi^{lr} \mathcal M_{r,1}\right),
$$
that is equivalent to a result of Tee \cite{tee}.

\section{Gain graphs and balance}\label{section4}
Let $\Gamma=(V_\Gamma,E_\Gamma)$ be a finite oriented simple graph such that $(v,u)\in E_\Gamma$  if and only if $(u,v)\in E_\Gamma$: if this is the case, we briefly write $u\sim v$ when the orientation is not relevant. Let $G$ be a group and consider a map $\Psi\colon E_\Gamma\to G$ such that $\Psi(u,v)=\Psi(v,u)^{-1}$. The pair $(\Gamma,\Psi)$ is a \emph{$G$-gain graph}  (or equivalently, a gain graph on $G$) and $\Psi$ is said to be a \emph{gain function}. Let us fix an order $v_1,v_2,\ldots, v_n$ in $V_\Gamma$. The adjacency matrix $A_{(\Gamma,\Psi)}\in M_n(\mathbb C G)$ is the matrix with entries
$$
(A_{(\Gamma,\Psi)})_{i,j}=
 \begin{cases}
 \Psi(v_i,v_j) &\mbox{if } (v_i,v_j)\in E_\Gamma
  \\  0 &\mbox{otherwise.}
\end{cases}
$$
Notice that, by virtue of the isomorphism presented in Section \ref{fou}, the matrix $A_{(\Gamma,\Psi)}$ can be regarded as an element of $C_{f}(G,M_n(\mathbb C))$, where, for every $g\in G$, the matrix $A_{(\Gamma,\Psi)}(g)\in M_n(\mathbb C)$ is the adjacency matrix of the (non-gain) oriented graph $\Gamma_g$ obtained by considering only the oriented edges whose gain equals $g$ in $(\Gamma,\Psi)$. Let us denote by $A^+\in M_n(\mathbb C)$ the adjacency matrix of the underlying graph $\Gamma$, which is a symmetric matrix with entries in $\{0,1\}$. The spectrum $\sigma(A^+)$ (resp. the spectral radius $\rho(A^+)$) will be referred as the spectrum $\sigma(\Gamma)$ (resp. the spectral radius $\rho(\Gamma)$) of the underlying graph $\Gamma$.

\begin{example}\label{esempiopaletta}\rm
In Fig. \ref{esempio1label} a gain graph $(\Gamma,\Psi)$ on $5$ vertices is represented, with $G=Sym(3)$. We use the convention that an oriented edge from a vertex $v_i$ to a vertex $v_j$ with label $\sigma\in Sym(3)$ is such that $\Psi(v_i,v_j)=\sigma$. We have:
$$
A_{(\Gamma,\Psi)} = \left(
                      \begin{array}{ccccc}
                        0 & (12) & 0 & 0 & 1_{Sym(3)} \\
                        (12) & 0 & (132) & (23) & 0 \\
                        0 & (123) & 0 & 1_{Sym(3)} & 0 \\
                        0 & (23) & 1_{Sym(3)} & 0 & (12) \\
                        1_{Sym(3)} & 0 & 0 & (12) & 0 \\
                      \end{array}
                    \right).
$$
\begin{figure}[h]
\begin{center}
\psfrag{v1}{$v_1$}\psfrag{v2}{$v_2$}\psfrag{v3}{$v_3$}\psfrag{v4}{$v_4$}
\psfrag{v5}{$v_5$}
\psfrag{(12)}{$(12)$}\psfrag{(123)}{$(123)$}\psfrag{id}{$1_{Sym(3)}$}\psfrag{(23)}{$(23)$}
\includegraphics[width=0.4\textwidth]{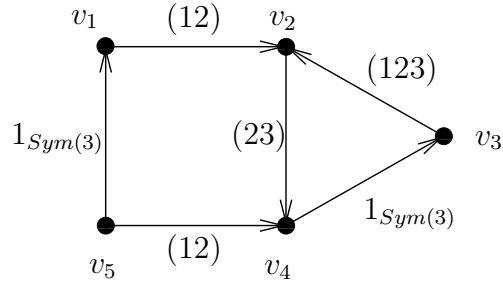}
\end{center}\caption{The gain graph $(\Gamma,\Psi)$ of Example \ref{esempiopaletta}.}  \label{esempio1label}
\end{figure}
\end{example}

\begin{proposition}\label{simmetry}
For a gain graph $(\Gamma,\Psi)$, the adjacency matrix $A_{(\Gamma,\Psi)}\in M_n(\mathbb C G)$ satisfies
$$
A_{(\Gamma,\Psi)}^*=A_{(\Gamma,\Psi)}.
$$
\end{proposition}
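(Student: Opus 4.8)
The plan is to unwind the definition of the involution on $M_n(\mathbb CG)$ entry by entry and reduce everything to the defining property $\Psi(u,v)=\Psi(v,u)^{-1}$ of the gain function together with the symmetry hypothesis on $E_\Gamma$. Recall from Section~\ref{fou} that the involution on $M_n(\mathbb CG)$ is given by $(F^*)_{i,j}=(F_{j,i})^*$, where the $^*$ on the right is the involution of $\mathbb CG$, namely $\big(\sum_x f_x x\big)^*=\sum_x \overline{f_x}\,x^{-1}$.

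First I would fix $i,j\in\{1,\ldots,n\}$ and compute $(A_{(\Gamma,\Psi)}^*)_{i,j}=(\,(A_{(\Gamma,\Psi)})_{j,i}\,)^*$. There are two cases. If $(v_j,v_i)\notin E_\Gamma$, then $(A_{(\Gamma,\Psi)})_{j,i}=0$, hence its involution is $0$; and by the assumption that $(v_j,v_i)\in E_\Gamma$ if and only if $(v_i,v_j)\in E_\Gamma$, also $(A_{(\Gamma,\Psi)})_{i,j}=0$, so the two entries agree. If instead $(v_j,v_i)\in E_\Gamma$, then $(A_{(\Gamma,\Psi)})_{j,i}=\Psi(v_j,v_i)\in G\subset\mathbb CG$; since this element of the group algebra is $1\cdot\Psi(v_j,v_i)$, its involution is $\overline{1}\cdot\Psi(v_j,v_i)^{-1}=\Psi(v_j,v_i)^{-1}$. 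Now the gain function property gives $\Psi(v_j,v_i)^{-1}=\Psi(v_i,v_j)$, and the edge symmetry gives $(v_i,v_j)\in E_\Gamma$, so $(A_{(\Gamma,\Psi)})_{i,j}=\Psi(v_i,v_j)$. Hence $(A_{(\Gamma,\Psi)}^*)_{i,j}=(A_{(\Gamma,\Psi)})_{i,j}$ in this case as well.

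Since $i,j$ were arbitrary, this establishes $A_{(\Gamma,\Psi)}^*=A_{(\Gamma,\Psi)}$. There is no real obstacle here: the statement is essentially a bookkeeping check that the abstract involution on $M_n(\mathbb CG)$ is the correct matrix-algebra analogue of the Hermitian-transpose condition satisfied by adjacency matrices of signed and complex unit gain graphs; the only points requiring attention are that the involution in $\mathbb CG$ inverts the group element (so that $\Psi(v_j,v_i)^{-1}$ appears, matching $\Psi(v_i,v_j)$) and that the underlying oriented graph is symmetric, so that the zero/non-zero pattern of $A_{(\Gamma,\Psi)}$ is itself symmetric.
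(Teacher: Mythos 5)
Your proof is correct and follows essentially the same route as the paper: both unwind the involution $(F^*)_{i,j}=(F_{j,i})^*$ entry by entry and reduce the claim to the identity $\Psi(v_j,v_i)^{-1}=\Psi(v_i,v_j)$, with your version merely adding the (trivial) explicit treatment of the zero entries that the paper leaves implicit. No issues.
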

\begin{proof}
For $v_i,v_j\in V_\Gamma$ such that $(v_i,v_j)\in E_\Gamma$, we have
$$
(A_{(\Gamma,\Psi)}^*)_{i,j}=
((A_{(\Gamma,\Psi)})_{j,i})^*=
(\Psi((v_j,v_i)))^*=
\Psi((v_j,v_i))^{-1}=
\Psi(v_i,v_j)=
(A_{(\Gamma,\Psi)})_{i,j}.
$$
\end{proof}
Let $W$ be a \emph{walk} of length $h$, that is, an ordered sequence of $h+1$  vertices of $\Gamma$, say $v_0,v_1,\ldots, v_h$, with $v_i\sim v_{i+1}$. We can define
$$
\Psi(W):=\Psi(v_0,v_1)\cdots \Psi(v_{h-1},v_h).
$$
A \emph{closed walk} of length $h$ is a walk of length $h$ with $v_0=v_{h}$.

\begin{definition}
The gain graph $(\Gamma,\Psi)$ is balanced if $\Psi(C)=1_G$ for every closed walk $C$.
\end{definition}
\begin{remark}\label{osscicli} \rm
Actually it is not necessary to check the condition on every closed walk, but only on the simple ones (without repetitions of vertices, that are finitely many). Even more, it is enough to test the gain only on a \emph{fundamental system of circles}, see \cite[Corollary~3.2]{zaslavsky1} and \cite{rybzas2,rybzas1} for further developments in this direction.
\end{remark}

\begin{example}\label{esempio1quaternioni}\rm
Consider the gain graph in Fig. \ref{esempio2label}, where $G=Q_8 = \{\pm 1, \pm i, \pm j, \pm k\}$ is the quaternion group. The adjacency matrix is given by
$$
A_{(\Gamma,\Psi)} = \left(
                      \begin{array}{cccc}
                        0 & -k & i & k \\
                        k & 0 & j & 0 \\
                        -i & -j & 0 & j \\
                        -k & 0 & -j & 0 \\
                      \end{array}
                    \right).
$$
The graph $(\Gamma,\Psi)$ is balanced since every closed walk has gain $1$, as one can easily check. On the other hand, notice that the gain graph in Fig. \ref{esempio1label} is not balanced.
\begin{figure}[h]
\begin{center}
\psfrag{v1}{$v_1$}\psfrag{v2}{$v_2$}\psfrag{v3}{$v_3$}\psfrag{v4}{$v_4$}
\psfrag{k}{$-k$}\psfrag{-i}{$-i$}\psfrag{j}{$j$}
\includegraphics[width=0.35\textwidth]{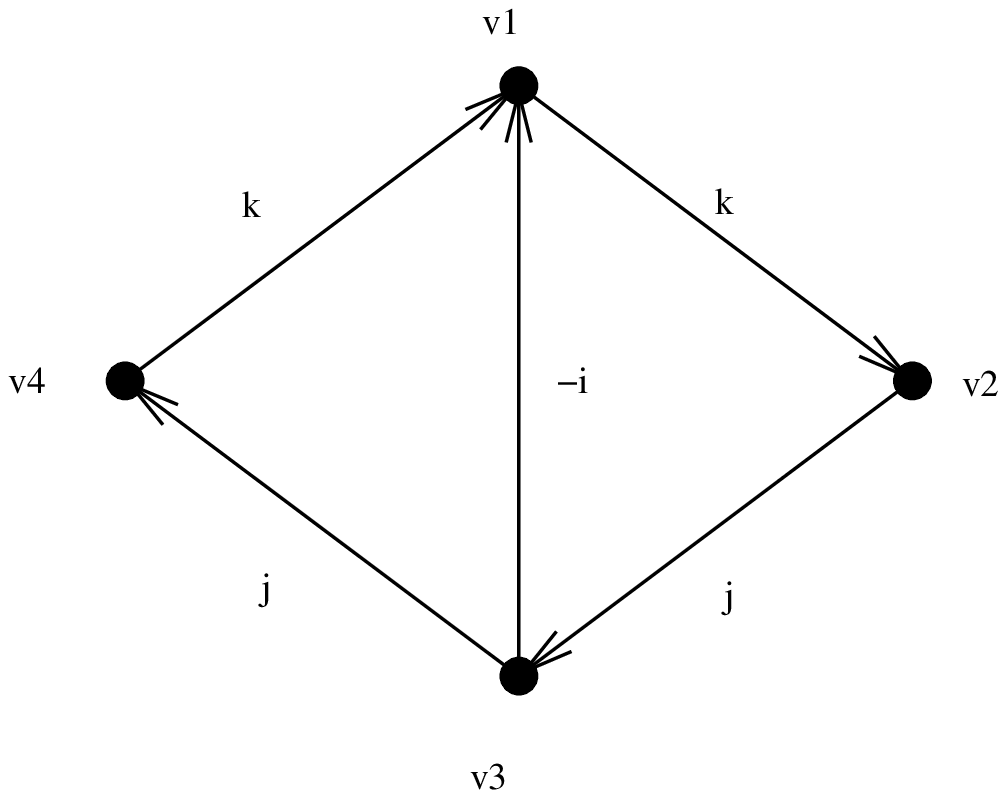}
\end{center}\caption{The gain graph of Example \ref{esempio1quaternioni}.}  \label{esempio2label}
\end{figure}
\end{example}
Another example of a balanced gain graph is the trivial one, where $\Psi(e)=1_G$ for any edge $e\in E_\Gamma$. We will denote by $\bold{1}_G$ the trivial gain function.
\\ A fundamental concept in the theory of gain graphs, inherited from the theory of signed graphs, is the \emph{switching equivalence}.
\begin{definition}\label{defswe}
Two gain functions $\Psi_1$ and $\Psi_2$ on the same underlying graph
$\Gamma$ are switching equivalent, and we shortly write $(\Gamma,\Psi_1)\sim(\Gamma,\Psi_2)$,  if there exists $f\colon V_\Gamma\to G$ such that
\begin{equation}\label{eqsw}
\Psi_2(v_i,v_j)=f(v_i)^{-1} \Psi_1(v_i,v_j)f(v_j), \qquad \forall (v_i,v_j)\in E_\Gamma.
\end{equation}

\end{definition}
It turns out that a gain graph $(\Gamma, \Psi)$ is balanced if and only if $(\Gamma, \Psi)\sim(\Gamma, \bold{1_G})$ (see \cite[Lemma~5.3]{zaslavsky1}). Moreover, in analogy with the signed case, the following result holds.
\begin{theorem}\label{swe}
Let $\Psi_1$ and $\Psi_2$ be two gain functions on the same underlying graph $\Gamma$, with adjacency matrices $A$ and $B\in M_n(\mathbb C G)$, respectively.
Then $(\Gamma,\Psi_1)\sim(\Gamma,\Psi_2)$ if and only if there exists a diagonal matrix $F\in M_n(\mathbb C G)$, with $F_{i,i}\in G$ for each $i=1\ldots, n$, such that $F^*AF=B$.
\end{theorem}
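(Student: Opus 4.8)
The plan is to translate the switching relation of Definition~\ref{defswe} directly into a matrix identity in $M_n(\mathbb CG)$, using the dictionary between a gain function and its adjacency matrix together with the involution computed in Proposition~\ref{simmetry}. Given a function $f\colon V_\Gamma\to G$ realizing the switching equivalence, I would define $F\in M_n(\mathbb CG)$ to be the diagonal matrix with $F_{i,i}:=f(v_i)\in G\subset \mathbb CG$ and $F_{i,j}:=0$ for $i\neq j$. The key computation is then to evaluate the $(i,j)$-entry of $F^*AF$ and match it with $B_{i,j}$.

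First I would record that for a diagonal $F$ with $F_{i,i}=f(v_i)\in G$, the involution $^*$ in $M_n(\mathbb CG)$ gives $(F^*)_{i,i}=(F_{i,i})^*=f(v_i)^{-1}$, since the involution on a group element $g\in\mathbb CG$ is $g^{-1}$ (with trivial complex conjugation). Then, because $F$ and $F^*$ are diagonal, the matrix product collapses: $(F^*AF)_{i,j}=(F^*)_{i,i}\,A_{i,j}\,F_{j,j}=f(v_i)^{-1}A_{i,j}f(v_j)$, where the products are taken in $\mathbb CG$. Now if $(v_i,v_j)\in E_\Gamma$ then $A_{i,j}=\Psi_1(v_i,v_j)\in G$, so $f(v_i)^{-1}\Psi_1(v_i,v_j)f(v_j)$ is again a single group element, and Eq.~\eqref{eqsw} says precisely that this equals $\Psi_2(v_i,v_j)=B_{i,j}$; if $(v_i,v_j)\notin E_\Gamma$ then $A_{i,j}=0$ forces $(F^*AF)_{i,j}=0=B_{i,j}$. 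This proves the forward implication, and shows $F$ lies in the asserted class (diagonal, with diagonal entries in $G$).

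For the converse, I would start from a diagonal $F\in M_n(\mathbb CG)$ with each $F_{i,i}\in G$ satisfying $F^*AF=B$, set $f(v_i):=F_{i,i}$, and run the same entrywise computation backwards: $(F^*AF)_{i,j}=f(v_i)^{-1}A_{i,j}f(v_j)$, and comparing with $B_{i,j}=\Psi_2(v_i,v_j)$ on edges recovers Eq.~\eqref{eqsw}, hence $(\Gamma,\Psi_1)\sim(\Gamma,\Psi_2)$. One should note that $\Psi_1$ and $\Psi_2$ are assumed to live on the \emph{same} underlying graph $\Gamma$, so the supports of $A$ and $B$ as functions $V_\Gamma^2\to\mathbb CG$ coincide and the condition $F^*AF=B$ only needs to be checked on edges — the off-edge entries are automatically $0$ on both sides.

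There is no serious obstacle here; it is essentially a bookkeeping argument once the isomorphism $M_n(\mathbb CG)\cong C_f(G,M_n(\mathbb C))$ and the form of the involution are in hand. The only point requiring a little care is the behavior of the involution on the diagonal entries: one must use that the involution in $\mathbb CG$ sends $g\mapsto g^{-1}$ (this is exactly what makes $F^*$ encode $f(v_i)^{-1}$), mirroring the role of the switching matrix in the signed-graph case where $F^*=F^{-1}$ because the entries are $\pm1$. I would also remark in passing that $F$ is invertible in $M_n(\mathbb CG)$ with $F^{-1}=F^*$, so the relation $F^*AF=B$ is genuinely a conjugation, consistent with the phrasing ``conjugated by a diagonal matrix'' used in the introduction.
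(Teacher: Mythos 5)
Your proposal is correct and follows essentially the same route as the paper: construct the diagonal matrix $F$ with $F_{i,i}=f(v_i)$, use that the involution in $\mathbb CG$ sends $g\mapsto g^{-1}$ so that $(F^*AF)_{i,j}=f(v_i)^{-1}A_{i,j}f(v_j)$, and match this entrywise with $B$ via Eq.~\eqref{eqsw}; the converse is the same computation run backwards. The additional remarks on the coincidence of supports and on $F^{-1}=F^*$ are accurate but not needed beyond what the paper records.
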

\begin{proof}
If $(\Gamma,\Psi_1)\sim(\Gamma,\Psi_2)$ there exists a map $f\colon V_\Gamma\to G$ such that Eq. \eqref{eqsw} holds.
The diagonal matrix $F\in M_n(\mathbb C G)$ with entries $F_{i,i}=f(v_i)$ satisfies $F^*AF=B$, since:
\begin{eqnarray*}
(F^*AF)_{i,j}&=&\sum_{r=1}^n\sum_{s=1}^n (F^*)_{i,r}{A}_{r,s} F_{s,j}=\sum_{r=1}^n\sum_{s=1}^n (F_{r,i})^*{A}_{r,s} F_{s,j}\\
&=& f(v_i)^{-1} {A}_{i,j} f(v_j)=
\begin{cases}
f(v_i)^{-1} \Psi_1(v_i,v_j)f(v_j) &\mbox{if } (v_i,v_j)\in E_\Gamma\\
0 &\mbox{otherwise,}
\end{cases}
\end{eqnarray*}
which corresponds exactly to the entry $B_{i,j}$ by Eq. \eqref{eqsw}.
Vice versa, if there exists a diagonal matrix $F\in M_n(\mathbb C G)$ such that $F^*AF=B$, we can define $f(v_i):=F_{i,i}$ and easily verify that Eq. \eqref{eqsw} holds.
\end{proof}

In what follows, we denote by $\mathcal W^h_{i,j}$ the set of walks  of length $h$ from $v_i$ to $v_j$.
We shortly write $A\in M_n(\mathbb C G)$ instead of $A_{(\Gamma,\Psi)}$, and we write $|A|\in M_n(\mathbb C G)$ instead of  $A_{(\Gamma,\bold{1}_G)}$.

\begin{lemma}\label{konig}
Let $A$ be the adjacency matrix of the $G$-gain graph $(\Gamma,\Psi)$.
We have
$$
(A^h)_{i,j}=\sum_{W\in\mathcal W^h_{i,j}} \Psi(W).
$$
\end{lemma}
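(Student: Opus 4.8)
The plan is to prove the identity $(A^h)_{i,j}=\sum_{W\in\mathcal W^h_{i,j}} \Psi(W)$ by induction on the walk length $h$, working entirely inside the group algebra $\mathbb C G$ (so that all the bookkeeping happens entry-wise in $M_n(\mathbb C G)$). The base case $h=1$ is just the definition of $A=A_{(\Gamma,\Psi)}$: the set $\mathcal W^1_{i,j}$ is empty if $(v_i,v_j)\notin E_\Gamma$, in which case $A_{i,j}=0$, and otherwise $\mathcal W^1_{i,j}=\{(v_i,v_j)\}$ with $\Psi((v_i,v_j))=A_{i,j}$, so the formula holds. (One may also want $h=0$ as a base case, with $A^0=I_n$ and the unique length-$0$ walk $(v_i)$ having gain $1_G$ when $i=j$ and no walk otherwise; this is harmless but $h=1$ suffices.)

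For the inductive step, I would assume the formula for some $h\ge 1$ and compute $(A^{h+1})_{i,j}=(A^h\cdot A)_{i,j}=\sum_{r=1}^n (A^h)_{i,r}\,A_{r,j}$, using the matrix product in $M_n(\mathbb C G)$. Substituting the inductive hypothesis gives
\begin{equation*}
(A^{h+1})_{i,j}=\sum_{r=1}^n\Big(\sum_{W\in\mathcal W^h_{i,r}}\Psi(W)\Big)A_{r,j}.
\end{equation*}
Now $A_{r,j}$ is either $0$ (if $v_r\not\sim v_j$) or $\Psi(v_r,v_j)$, and multiplication on the right by $\Psi(v_r,v_j)$ in $\mathbb C G$ distributes over the sum; since $\Psi(W)\cdot\Psi(v_r,v_j)=\Psi(W')$ where $W'$ is the walk $W$ extended by the edge $(v_r,v_j)$, each term of the double sum corresponds to exactly one walk of length $h+1$ from $v_i$ to $v_j$ whose next-to-last vertex is $v_r$. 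Conversely every $W'\in\mathcal W^{h+1}_{i,j}$ arises uniquely this way, by deleting its last vertex to obtain a walk in $\mathcal W^h_{i,v_r}$ for $r$ determined by $W'$. Hence the double sum equals $\sum_{W'\in\mathcal W^{h+1}_{i,j}}\Psi(W')$, completing the induction.

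There is no real obstacle here — the only point requiring a little care is the bijection between "(walk of length $h$ ending at $v_r$) $+$ (edge $v_r\to v_j$)" and "walk of length $h+1$ ending at $v_j$", together with the compatibility $\Psi(W)\Psi(v_r,v_j)=\Psi(W\!\frown\!(v_r,v_j))$, which is immediate from the definition $\Psi(W)=\Psi(v_0,v_1)\cdots\Psi(v_{h-1},v_h)$. The vanishing terms ($A_{r,j}=0$) exactly account for the absence of any length-$(h+1)$ walk through a non-neighbour $v_r$, so nothing is lost or double-counted. One should note that this uses only the associative product of $M_n(\mathbb C G)$ and the linearity of the $\mathbb C G$-multiplication, and does not require $\Gamma$ to be simple or the gain function to satisfy $\Psi(v,u)=\Psi(u,v)^{-1}$; the latter symmetry is only what makes $A$ self-adjoint (Proposition \ref{simmetry}) and is not needed for Lemma \ref{konig}.
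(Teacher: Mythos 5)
Your proof is correct and follows essentially the same route as the paper's: induction on $h$ with base case $h=1$ by definition, and the inductive step decomposing each walk of length $h+1$ into a length-$h$ walk followed by its final edge, matched against the entry-wise product $\sum_{r}(A^h)_{i,r}A_{r,j}$ in $M_n(\mathbb C G)$. The extra remarks on the bijection and on which hypotheses are actually used are sound but not needed beyond what the paper records.
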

\begin{proof}
We will prove the claim by induction on $h$. For $h=1$, the claim is true by definition of $\Psi$. Suppose that the equality holds for $h$. For fixed $i,j\in \{1,\ldots, n\}$ we have:
\begin{align*}
\sum_{\tiny W\in\mathcal W^{h+1}_{i,j}} \Psi(W)&= \sum_{v_{j'} \sim v_j} \sum_{\tiny W\in\mathcal W^h_{i,j'}}   \Psi(W) \Psi(v_{j'},v_j)= \sum_{j':v_j\sim v_{j'}} (A^h)_{i,j'} A_{j',j}\\&=\sum_{j'=1}^n (A^h)_{i,j'} A_{j',j}=(A^{h+1})_{i,j}.
\end{align*}
\end{proof}

Let us denote by $\mathcal C^h:=\bigcup_{i=1}^{n} \mathcal W_{i,i}^h$ the set of all closed walks of length $h$. We partition this set into balanced and unbalanced closed walks, so that ${\mathcal C}^h=\mathcal C_b^h\sqcup \mathcal C_u^h$,
with $\mathcal C_b^h:=\{C\in {\mathcal C}^h: \Psi(C)=1_G\}$ and  $\mathcal C_u^h:=\{C\in {\mathcal C}^h: \Psi(C)\neq1_G\}$.\\

A trace on the algebra $\mathbb C G$ can be defined as follows: for $f=\sum_{g\in G} f_g g$, we put $Tr(f)=f_{1_G}$ (or $f(1_G)$ regarding $f$ as an element of $C_{f}(G)$).
By composing with the usual trace on $M_n(\mathbb C)$, we define the trace of a matrix $A\in M_n(\mathbb C G)$ as:
\begin{equation}\label{trace}
Tr(A):=\sum_{i=1}^n Tr(A_{i,i}).
\end{equation}
Using Lemma \ref{konig} and Eq. \eqref{trace} we deduce
\begin{equation}\label{traccicli}
Tr(|A|^h)=|{\mathcal C}^{h}| \qquad\qquad Tr(A^h)=|{\mathcal C}_b^{h}|.
\end{equation}
Notice that also for the adjacency matrix $A^+\in M_n(\mathbb C)$ of the underlying graph $\Gamma$ we have $Tr((A^+)^h)=|{\mathcal C}^{h}|$ (in this case $Tr$ is the usual trace on $M_n(\mathbb C))$).
Let $\mu_1\geq\mu_2\geq\cdots\geq\mu_n$ be the eigenvalues of the underlying graph $\Gamma$; it is well known that $\rho(\Gamma)=\mu_1$ and, if
$\Gamma$ is connected, then $\mu_1>\mu_2$. Moreover $\mu_n=-\rho(\Gamma)$ if and only if $\Gamma$ is bipartite. Since $|\mathcal C^h|=Tr((A^+)^h)=\mu_1^h+\mu_2^h+\cdots+\mu_n^h$, for a connected graph $\Gamma$ we have, for $h\to\infty$
\begin{equation}\label{exp}
\begin{cases}
\rho(\Gamma)^h / |{\mathcal C}^h| \to 1\quad&\mbox{ if $\Gamma$ is not bipartite} \\
2\rho(\Gamma)^{2h}/ |{\mathcal C}^{2h}| \to 1\quad&\mbox{ if $\Gamma$ is  bipartite.}
\end{cases}
\end{equation}
(Notice that in a bipartite graph every closed walk has even length.)

\begin{theorem}\label{thtraccia}
Let $(\Gamma,\Psi)$ be a connected gain graph, $A\in M_n(\mathbb C G)$ its adjacency matrix and $|A|\in M_n(\mathbb C G)$  the adjacency matrix of $(\Gamma,\bold{1_G})$. The following are equivalent:
\begin{enumerate}
\item[(i)]
$(\Gamma,\Psi)$ is balanced;
\item[(ii)]
 $Tr(A^h)=Tr(|A|^h)$ for every $h\in \mathbb N$;
 \item[(iii)]
 $\begin{cases}
Tr(A^{h})/Tr(|A|^{h})=\frac{|C_b^h|}{|C^h|}\to1\quad&\mbox{ if $\Gamma$ is not bipartite}\\
Tr(A^{2h})/Tr(|A|^{2h})=\frac{|C_b^{2h}|}{|C^{2h}|}\to 1\quad&\mbox{ if $\Gamma$ is  bipartite}.
\end{cases}$
\end{enumerate}
\end{theorem}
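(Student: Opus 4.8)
The plan is to prove the chain of implications $(i)\Rightarrow(ii)\Rightarrow(iii)\Rightarrow(i)$, relying on the trace formulas already established in Eq.~\eqref{traccicli}, namely $Tr(A^h)=|\mathcal{C}_b^h|$ and $Tr(|A|^h)=|\mathcal{C}^h|$, together with the asymptotics of Eq.~\eqref{exp}.

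For $(i)\Rightarrow(ii)$: if $(\Gamma,\Psi)$ is balanced, then every closed walk has gain $1_G$, so $\mathcal{C}_u^h=\emptyset$ and $\mathcal{C}_b^h=\mathcal{C}^h$ for every $h$. Hence $Tr(A^h)=|\mathcal{C}_b^h|=|\mathcal{C}^h|=Tr(|A|^h)$. Actually one must be slightly careful: $Tr(A^h)=|\mathcal{C}_b^h|$ is the count of closed walks with gain $1_G$, but unbalanced closed walks can still contribute to $(A^h)_{i,i}$ as non-identity group elements; the point is precisely that the trace picks out the coefficient of $1_G$, so the formula $Tr(A^h)=|\mathcal{C}_b^h|$ from Eq.~\eqref{traccicli} already accounts for this, and in the balanced case $|\mathcal{C}_b^h|=|\mathcal{C}^h|$ gives the claim directly.

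For $(ii)\Rightarrow(iii)$: assuming $Tr(A^h)=Tr(|A|^h)$ for all $h$, the ratio $Tr(A^h)/Tr(|A|^h)$ is identically $1$ (note $Tr(|A|^h)=|\mathcal{C}^h|>0$ for $h$ even, and for $h$ odd in the non-bipartite connected case, since $\Gamma$ is connected with at least one edge), so a fortiori the limit is $1$, in either the bipartite or non-bipartite case. The identification of $Tr(A^h)/Tr(|A|^h)$ with $|\mathcal{C}_b^h|/|\mathcal{C}^h|$ is immediate from Eq.~\eqref{traccicli}.

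The substantive direction is $(iii)\Rightarrow(i)$, which I expect to be the main obstacle. Suppose, toward a contradiction, that $(\Gamma,\Psi)$ is not balanced. Then there is a closed walk with gain $\neq 1_G$, hence (traversing it an appropriate number of times, or concatenating with its reverse) there exists some $h_0$ with $\mathcal{C}_u^{h_0}\neq\emptyset$. The key quantitative claim is that for the ratio $|\mathcal{C}_b^h|/|\mathcal{C}^h|$ to tend to $1$, the unbalanced closed walks must become asymptotically negligible, yet I must show this forces $|\mathcal{C}_u^h|=0$ for all $h$ eventually, contradicting $\mathcal{C}_u^{h_0}\neq\emptyset$. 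The mechanism: pick a vertex $v_i$ lying on an unbalanced closed walk $C_0$ of length $\ell$ with $\Psi(C_0)=g\neq 1_G$. For any balanced closed walk $B$ of length $m$ based at $v_i$, the concatenation $C_0\cdot B$ is a closed walk of length $\ell+m$ based at $v_i$ with gain $g\cdot 1_G=g\neq 1_G$, hence unbalanced; this injects $\mathcal{C}_b^m$ (restricted to closed walks at $v_i$) into $\mathcal{C}_u^{\ell+m}$. Combined with the spectral estimate from Eq.~\eqref{exp} — which shows $|\mathcal{C}^h|$ grows like $\rho(\Gamma)^h$ (suitably interpreted in the bipartite case with $2h$) — and the fact that the number of closed walks at $v_i$ of length $m$ is itself comparable to $\rho(\Gamma)^m$ up to a positive constant (by connectedness and Perron–Frobenius, using that $(A^+)^m_{i,i}\sim c\,\rho(\Gamma)^m$ along the relevant parity), one deduces $|\mathcal{C}_u^{\ell+m}|\geq c'\rho(\Gamma)^{\ell+m}$, so $|\mathcal{C}_u^h|/|\mathcal{C}^h|$ does not go to $0$, i.e.\ $|\mathcal{C}_b^h|/|\mathcal{C}^h|\not\to 1$ — a contradiction. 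One must handle the bipartite case with the $2h$-subsequence throughout (note $C_0$ can be replaced by $C_0$ traversed twice, of even length $2\ell$, still with gain $g^2$; if $g^2=1_G$ one instead concatenates $C_0$ with a length-adjusting balanced detour, or simply uses that $C_0\cdot\overline{C_0'}$ for a suitable companion walk has even length and non-identity gain — the combinatorial bookkeeping here is the fiddly part). Once the contradiction is reached, $(\Gamma,\Psi)$ must be balanced, completing the cycle of implications.
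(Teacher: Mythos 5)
Your overall strategy is correct and its core mechanism coincides with the paper's: the easy implications are handled identically via Eq.~\eqref{traccicli}, and for (iii)$\Rightarrow$(i) both you and the authors inject balanced closed walks into unbalanced ones of a shifted length by concatenating with a fixed unbalanced closed walk, then derive a contradiction from growth rates. The execution differs in one respect that matters. The paper assumes (without loss of generality) that the unbalanced closed walk $W$ of length $p$ visits \emph{every} vertex, and concatenates each $C\in\mathcal C_b^h$ with the cyclic shift $W_v$ of $W$ based at the starting vertex $v$ of $C$ (whose gain is a conjugate of $\Psi(W)$, hence nontrivial); this yields the global injection $\mathcal C_b^h\hookrightarrow\mathcal C_u^{h+p}$, so that only the trace asymptotics of Eq.~\eqref{exp} are needed. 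You instead base everything at a single vertex $v_i$, which forces you to invoke the pointwise Perron--Frobenius asymptotics $(A^+)^m_{i,i}\sim c\,\rho(\Gamma)^m$ with $c>0$ --- true for a connected graph, but not established in the paper and genuinely extra machinery. Moreover, as written your lower bound has a small hole: you inject only the \emph{balanced} closed walks at $v_i$ into $\mathcal C_u^{\ell+m}$, yet you estimate the number of \emph{all} closed walks at $v_i$; to close this you must use hypothesis (iii) once more, namely that $|\mathcal C_u^m|=o(|\mathcal C^m|)=o(\rho(\Gamma)^m)$, so that the balanced closed walks at $v_i$ still number at least $c\,\rho(\Gamma)^m(1+o(1))$. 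Finally, your worries about arranging an even-length unbalanced walk in the bipartite case are unnecessary: in a bipartite graph every closed walk already has even length (as the paper notes), so $\ell$ is automatically even and no doubling or detour is needed. With these two points patched, your argument is a valid, if slightly heavier, alternative to the paper's.
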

\begin{proof}
(i)$\implies$(ii) easily follows by Eq. \eqref{traccicli}, and (ii)$\implies$(iii) is obvious.\\
(iii)$\implies$(i)\\
We are going to prove, by contradiction, that the negation of (i) implies the negation of (iii).
Suppose that there exists an unbalanced closed walk $W=v_0,\ldots,v_p$ with $v_i\in V_\Gamma$.
Without loss of generality, we can assume that $W$ visits all the vertices of $\Gamma$. Notice that, if $\Gamma$ is bipartite, $p$ must be even. Then each $v\in V_\Gamma$ is reached by $W$: let us denote by $i_v:=\min\{i: v_i=v\}$, and
by $W_v$ the closed walk $v=v_{i_v},v_{i_v+1},\ldots,v_p, v_0, v_1,\ldots,v_{i_v}=v$ (a \emph{shift} of the closed walk $W$, with different starting point).
We define a map $\varphi\colon {\mathcal C}_b^h\to {\mathcal C}^{h+p}_u$ mapping
$C\in {\mathcal C}_b^h$ to the concatenation of $C$ with $W_v$, where $v$ is the starting vertex of $C$.
It is clear that $\varphi(C)$ is in fact an unbalanced closed walk. Moreover, the map $\varphi$ is injective. Therefore for every $h\in \mathbb N$ such that $|{\mathcal C}^{h+p}|\neq 0$ and $|{\mathcal C}^{h}|\neq0$ we have
\begin{equation}\label{contrad}
\frac{|{\mathcal C}_u^{h+p}|}{|{\mathcal C}^{h+p}|}\geq \frac{|{\mathcal C}_b^{h}|}{|{\mathcal C}^{h+p}|}= \frac{|{\mathcal C}_b^{h}|}{|{\mathcal C}^{h}|}\cdot \frac{|{\mathcal C}^{h}|}{|{\mathcal C}^{h+p}|}.
\end{equation}
If the graph is not bipartite, by Eq. \eqref{exp}, we have that $|{\mathcal C}^{h}|/|{\mathcal C}^{h+p}|\to \rho(\Gamma)^{-p}> 0$.
Suppose, by the absurd, that (iii) holds, so that $\frac{|C_b^h|}{|C^h|}\to1$. Then $|{\mathcal C}_u^{h}|/|{\mathcal C}^{h}|$ and its subsequence $|{\mathcal C}_u^{h+p}|/|{\mathcal C}^{h+p}|$ tend to zero, that contradicts Eq. \eqref{contrad}. The proof in the case of a bipartite graph $\Gamma$ is analogous.
\end{proof}

\begin{remark}  \rm
We believe that the result of Theorem \ref{thtraccia} is interesting in itself; however, if $A$ and $|A|$ were complex matrices, this result would imply that the balance is equivalent to cospectrality between  $A$ and $|A|$ (see Lemma \ref{traccia} in Section \ref{sectionbalancenovembre}). At this purpose, in the next section, we introduce the \emph{Fourier transform of the adjacency matrices},  so that we can work with complex matrices, obtaining the wanted equivalence for a general gain graph.
\end{remark}

\section{Balance via spectra of represented adjacency matrices}\label{sectionbalancenovembre}
Let $\pi\colon G \to U_k(\mathbb C)$ be a unitary representation of $G$; let us denote by $\chi_\pi\colon G\to \mathbb C$ the associated character. Notice that $\chi_\pi(1_G)= \deg (\pi) = k$. In what follows, we denote by $\Re (z)$ the real part of the complex number $z$.

\begin{lemma}\label{reale}
For a group $G$ and a unitary representation $\pi\colon G \to U_k(\mathbb C)$, we have
\begin{equation}\label{ub}
\Re(\chi_\pi(g))\leq k
\end{equation} and the equality holds if and only if $g\in \ker (\pi)$.
\end{lemma}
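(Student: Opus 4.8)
The plan is to use the fact that a unitary matrix is diagonalizable with eigenvalues on the unit circle, so that the character is a sum of such eigenvalues, and then to compare real parts.

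\medskip

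First I would recall that since $\pi(g)\in U_k(\mathbb C)$, the matrix $\pi(g)$ is normal, hence unitarily diagonalizable; its eigenvalues $\lambda_1,\dots,\lambda_k$ all satisfy $|\lambda_r|=1$. By definition of the character, $\chi_\pi(g)=Tr(\pi(g))=\sum_{r=1}^k \lambda_r$, so
\begin{equation*}
\Re(\chi_\pi(g))=\sum_{r=1}^k \Re(\lambda_r)\leq \sum_{r=1}^k |\lambda_r| = \sum_{r=1}^k 1 = k,
\end{equation*}
where the single inequality is just $\Re(z)\leq |z|$ applied termwise. This proves \eqref{ub}.

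\medskip

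Next I would analyze the equality case. Equality $\Re(\chi_\pi(g))=k$ forces $\Re(\lambda_r)=|\lambda_r|=1$ for every $r$, because each summand $\Re(\lambda_r)$ is bounded above by $|\lambda_r|=1$ and the sum of the upper bounds is exactly $k$; if even one term were strictly smaller, the total would be strictly less than $k$. But $\Re(\lambda_r)=1$ together with $|\lambda_r|=1$ forces $\lambda_r=1$. Hence all eigenvalues of $\pi(g)$ equal $1$, and since $\pi(g)$ is diagonalizable, this means $\pi(g)=I_k$, i.e.\ $g\in\ker(\pi)$. Conversely, if $g\in\ker(\pi)$ then $\pi(g)=I_k$, so $\chi_\pi(g)=k$ and $\Re(\chi_\pi(g))=k$.

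\medskip

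I expect no serious obstacle here; the only point requiring a little care is justifying that $\Re(\lambda_r)=1$ and $|\lambda_r|=1$ together imply $\lambda_r=1$ (write $\lambda_r=a+bi$ with $a=1$ and $a^2+b^2=1$, so $b=0$), and that $\pi(g)$ being \emph{diagonalizable} with all eigenvalues $1$ is what lets us conclude $\pi(g)=I_k$ — without normality one could only conclude the eigenvalues are $1$, not that the matrix is the identity. Both facts are immediate from unitarity, so the proof is short.
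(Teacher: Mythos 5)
Your proof is correct and follows essentially the same route as the paper: write $\chi_\pi(g)$ as the sum of the eigenvalues of the unitary matrix $\pi(g)$, bound real parts by moduli, and observe that equality forces every eigenvalue to equal $1$. You are in fact slightly more careful than the paper in spelling out that diagonalizability of $\pi(g)$ is what upgrades ``all eigenvalues equal $1$'' to $\pi(g)=I_k$.
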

\begin{proof}
For any $g\in G$ the value $\chi_\pi(g)$ is the sum of the (not necessarily distinct) eigenvalues $\lambda_1,\ldots, \lambda_k$ of $\pi(g)$. Since $\pi(g)$ is a unitary matrix, the spectrum lies in the unit circle, so that we have
$$\Re(\chi_\pi(g))\leq |\chi_\pi(g)|\leq |\lambda_1|+\cdots+|\lambda_k|= k,$$
which proves Eq. \eqref{ub}. Moreover, the equality  $k=\Re(\chi_\pi(g))=\Re(\lambda_1)+\cdots+\Re(\lambda_k)$ holds if and only if $\lambda_j=1$ for every $j=1,\ldots, k$, that is, if and only if $\pi(g)=I_k$.
\end{proof}

\begin{definition}
Let $\pi$ be a unitary representation of $G$ of finite degree, with character $\chi_\pi$. We define:
\begin{itemize}
\item the map ${Tr_\pi\colon \mathbb C G\to \mathbb C}$ such that $Tr_\pi(f):=Tr(\widehat{f}(\pi))$ for each $f\in \mathbb C G$;
\item the map $Tr_\pi\colon M_n(\mathbb C G)\to\mathbb C$ such that $Tr_\pi(F):=Tr(\widehat{F}(\pi))$ for each $F\in  M_n(\mathbb C G)$.
\end{itemize}
\end{definition}

\begin{remark}\label{remarknovembrino}\rm
Notice that, by Definition \ref{deffou1}, for $f\in \mathbb C G$ we have $Tr_\pi(f)=\sum_{g\in G} f(g) \chi_\pi(g)$.  Moreover, by Proposition \ref{blocchi}, for $F\in  M_n(\mathbb C G)$, we have
$Tr_\pi(F)=\sum_{i=1}^n Tr_\pi(F_{i,i})=\sum_{i=1}^n \sum_{g\in G} F_{i,i}(g) \chi_\pi(g).$
\end{remark}

\begin{example}\rm
Let  $\lambda_G$ be the left regular representation of $G$, so that $\deg(\pi)=|G|$. Then, by Eq. \eqref{trace} and Remark \ref{remarknovembrino}, we get, for each
$A\in M_n(\mathbb C G)$:
\begin{align*}
Tr_{\lambda_G}(A)=\sum_{i=1}^n Tr_{\lambda_G}(A_{i,i})= \sum_{i=1}^n \sum_{g\in G} A_{i,i}(g) \chi_{\lambda_G}(g)=  \sum_{i=1}^n |G| A_{i,i} (1_G)=
|G|Tr(A).
\end{align*}
\end{example}

For a $G$-gain graph $(\Gamma,\Psi)$ and a representation $\pi$ of $G$ of degree $k$, we introduce the \emph{represented adjacency matrix}
$$
A_{(\Gamma,\Psi,\pi)}:=\widehat{A_{(\Gamma,\Psi)}}(\pi)\in M_{nk}(\mathbb C).
$$
Roughly speaking, $A_{(\Gamma,\Psi,\pi)}$ is the matrix obtained from the matrix  $A_{(\Gamma,\Psi)}\in M_{n}(\mathbb C G)$ by replacing each occurrence of $g\in G$ with the block $\pi(g)$ and each $0\in \mathbb C G$ with a zero block of size $k$.
We shortly write $A_\pi$ instead of $A_{(\Gamma,\Psi,\pi)}$ and $|A|_\pi$ instead of $A_{(\Gamma,\bold{1}_G,\pi)}$. Notice that, when one considers the trivial representation $\pi_0$, one has $A_{\pi_0}=|A|_{\pi_0}=A^+$, which is the adjacency matrix of the underlying graph $\Gamma$.

\begin{proposition}\label{her}
If $\pi$ is a unitary representation of $G$, then the matrix $A_\pi$ is Hermitian.
\end{proposition}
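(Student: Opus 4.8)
The plan is to reduce the statement to two facts already in hand. Proposition~\ref{simmetry} tells us that the abstract adjacency matrix $A_{(\Gamma,\Psi)}\in M_n(\mathbb C G)$ is self-adjoint for the involution $^*$ of $M_n(\mathbb C G)$, i.e. $A_{(\Gamma,\Psi)}^*=A_{(\Gamma,\Psi)}$. The last assertion of Proposition~\ref{productfou} tells us that, when $\pi$ is unitary, the Fourier transform at $\pi$ intertwines this involution with the Hermitian transpose on $M_{nk}(\mathbb C)$, namely $\widehat{F^*}(\pi)=\widehat{F}(\pi)^*$ for every $F\in M_n(\mathbb C G)$. Taking $F=A_{(\Gamma,\Psi)}$ then yields
\[
A_\pi^*=\widehat{A_{(\Gamma,\Psi)}}(\pi)^*=\widehat{A_{(\Gamma,\Psi)}^*}(\pi)=\widehat{A_{(\Gamma,\Psi)}}(\pi)=A_\pi,
\]
which is exactly the claim. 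This is the whole argument; no estimate or induction is needed.

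Alternatively — and this is in effect a direct reproof of the relevant special case of Proposition~\ref{productfou} — I would argue block by block via Proposition~\ref{blocchi}. The $(i,j)$ block of $A_\pi$ equals $\widehat{(A_{(\Gamma,\Psi)})_{i,j}}(\pi)$, which is $\pi(\Psi(v_i,v_j))$ if $(v_i,v_j)\in E_\Gamma$ and the $k\times k$ zero block otherwise. Because $\Gamma$ is such that $(v_i,v_j)\in E_\Gamma$ iff $(v_j,v_i)\in E_\Gamma$, the zero pattern is symmetric, and on edges one has $\Psi(v_j,v_i)=\Psi(v_i,v_j)^{-1}$, so the $(j,i)$ block is $\pi(\Psi(v_i,v_j)^{-1})=\pi(\Psi(v_i,v_j))^{-1}$. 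Since $\pi$ is unitary, $\pi(g)^{-1}=\pi(g)^*$, so the $(j,i)$ block is the conjugate transpose of the $(i,j)$ block, which is precisely the statement that $A_\pi\in M_{nk}(\mathbb C)$ is Hermitian.

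There is essentially no obstacle here: the one point that must not be glossed over is that unitarity of $\pi$ is exactly what upgrades the trivial identity $\pi(g^{-1})=\pi(g)^{-1}$ (valid for any representation) to $\pi(g^{-1})=\pi(g)^{*}$; this is what turns the symmetry of Proposition~\ref{simmetry} into genuine Hermiticity of $A_\pi$. I would state the proof in the first, two-line form, since it best exhibits that Proposition~\ref{her} is a formal consequence of the machinery of Section~\ref{fou} together with Proposition~\ref{simmetry}.
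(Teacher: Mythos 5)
Your first, two-line argument is exactly the paper's proof: the paper simply cites Propositions~\ref{productfou} and~\ref{simmetry}, which is precisely the chain $A_\pi^*=\widehat{A^*}(\pi)=\widehat{A}(\pi)=A_\pi$ you spell out. The block-by-block alternative is a correct but redundant re-derivation of the special case of Proposition~\ref{productfou} that is being used.
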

\begin{proof}
It is a consequence of Propositions \ref{productfou} and \ref{simmetry}.
\end{proof}

\begin{proposition}\label{ll}
Let $\pi$ be a  faithful unitary representation of $G$ of finite degree. Then the graph $(\Gamma,\Psi)$ is balanced if and only if $Tr\left((A_\pi)^h\right)=Tr\left((|A|_\pi)^h\right)$ for all $h\in \mathbb N$.
\end{proposition}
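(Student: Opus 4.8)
The plan is to translate both sides of the claimed identity into weighted counts of closed walks, and then let the faithfulness of $\pi$ do the separation via Lemma~\ref{reale}.

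First I would use the multiplicativity of the Fourier transform. By Proposition~\ref{productfou} one has $(A_\pi)^h=(\widehat{A}(\pi))^h=\widehat{A^h}(\pi)$, hence $Tr\bigl((A_\pi)^h\bigr)=Tr_\pi(A^h)$, and likewise $Tr\bigl((|A|_\pi)^h\bigr)=Tr_\pi(|A|^h)$. Combining Remark~\ref{remarknovembrino} with Lemma~\ref{konig} gives
\[
Tr_\pi(A^h)=\sum_{i=1}^{n}\sum_{g\in G}(A^h)_{i,i}(g)\,\chi_\pi(g)=\sum_{g\in G}c_h(g)\,\chi_\pi(g),
\]
where $c_h(g):=\sum_{i=1}^{n}(A^h)_{i,i}(g)$ is precisely the number of closed walks $C\in\mathcal C^h$ with $\Psi(C)=g$; in particular $\sum_{g\in G}c_h(g)=|\mathcal C^h|$. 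Running the same computation for $|A|$, all of whose closed walks have gain $1_G$, yields $Tr_\pi(|A|^h)=|\mathcal C^h|\,\chi_\pi(1_G)=k\sum_{g\in G}c_h(g)$, with $k=\deg(\pi)$.

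The forward implication is then immediate: if $(\Gamma,\Psi)$ is balanced, then $c_h(g)=0$ for every $g\neq 1_G$, so $Tr_\pi(A^h)=c_h(1_G)\chi_\pi(1_G)=k|\mathcal C^h|=Tr_\pi(|A|^h)$ for all $h\in\mathbb N$.

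For the converse I would start from the assumed identity $\sum_{g\in G}c_h(g)\chi_\pi(g)=k\sum_{g\in G}c_h(g)$, valid for every $h$. Since the coefficients $c_h(g)$ are nonnegative integers, passing to real parts gives
\[
\sum_{g\in G}c_h(g)\bigl(k-\Re(\chi_\pi(g))\bigr)=0 .
\]
This is the step I expect to be the heart of the proof, and where Lemma~\ref{reale} is used: each summand is nonnegative, and it vanishes exactly when $g\in\ker(\pi)$; as $\pi$ is faithful, $\ker(\pi)=\{1_G\}$, so for $g\neq 1_G$ one must have $c_h(g)=0$. Since this holds for all $h\in\mathbb N$, no closed walk of $(\Gamma,\Psi)$ has nontrivial gain, i.e. $(\Gamma,\Psi)$ is balanced. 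The only points requiring a little care are that we may split a vanishing sum of nonnegative terms term by term, and that it is exactly faithfulness that upgrades the equality case of Lemma~\ref{reale} from ``$g\in\ker(\pi)$'' to ``$g=1_G$''; both become routine once the walk-counting reformulation of the first step is in hand.
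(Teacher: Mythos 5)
Your proposal is correct and follows essentially the same route as the paper: both reduce $Tr((A_\pi)^h)$ to a sum of $\chi_\pi(\Psi(W))$ over closed walks via Lemma~\ref{konig} and Proposition~\ref{productfou}, and both deduce balance from the equality case of Lemma~\ref{reale} together with faithfulness. The only cosmetic difference is that you group walks by their gain $g$ and phrase the conclusion as a vanishing sum of nonnegative terms, whereas the paper keeps the sum over individual walks and phrases it as an inequality that is tight exactly in the balanced case.
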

\begin{proof}
Suppose $\deg(\pi)=k$. Observe that, by virtue of Proposition \ref{her}, the value $Tr\left((A_\pi)^h\right)$ is real.
By  Proposition \ref{productfou}, for every $F\in M_n(\mathbb C G)$, we have  $Tr((\widehat{F}(\pi))^h)=Tr(\widehat{F^h}(\pi))=Tr_\pi(F^h)$. It follows:
\begin{equation}\label{miserve}
\begin{split}
Tr\left((A_\pi)^h\right)&=Tr_\pi(A^h)=\sum_{i=1}^n Tr_\pi((A^h)_{i,i})= \sum_{i=1}^n Tr_\pi\left( \sum_{W\in\mathcal W_{i,i}^h}  \Psi(W)\right)= \sum_{W\in\mathcal C^{h}} \chi_\pi (\Psi(W))\\
&=
\Re\left( \sum_{W\in\mathcal C^{h}}  \chi_\pi (\Psi(W))\right)
=  \sum_{W\in\mathcal C^{h}} \Re(\chi_\pi (\Psi(W)))= \sum_{W\in\mathcal C_b^h} k +\sum_{W\in\mathcal C_u^{h}} \Re(\chi_\pi (\Psi(W)))
\\ &\leq \sum_{W\in\mathcal C^h} k= \sum_{i=1}^n \sum_{W\in\mathcal W_{i,i}^h} \chi_\pi (1_G)
= \sum_{i=1}^n Tr_\pi((|A|^h)_{i,i})=Tr\left((|A|_\pi)^h\right).
\end{split}
\end{equation}
Since $\pi$ is unitary and faithful, by  Lemma \ref{reale} the equality holds if and only if $\Psi(W)=1_G$ for all $W\in \mathcal C^h$ and $h\in\mathbb N$, which is the balance of $(\Gamma,\Psi)$.
\end{proof}

The following lemma is a consequence of Specht's theorem in the case of Hermitian matrices \cite{specht}.
\begin{lemma}\label{traccia}
For Hermitian matrices $A$ and $B$, we have $\sigma(A)=\sigma(B) \iff Tr(A^h)=Tr(B^h),\quad \forall h\in \mathbb N.$
\end{lemma}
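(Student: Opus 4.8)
The plan is to prove both implications directly, since the statement is really a fact about symmetric functions of the eigenvalues, and then to note how it also drops out of Specht's theorem as announced.

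For the forward implication, assume $\sigma(A)=\sigma(B)$, read as an equality of multisets of size $n$, and list the common eigenvalues as $\lambda_1,\dots,\lambda_n$ (all real, since $A$ and $B$ are Hermitian). Being Hermitian, $A$ and $B$ are diagonalizable, so $A^h$ and $B^h$ both have spectrum $\{\lambda_1^h,\dots,\lambda_n^h\}$ with the same multiplicities; taking traces yields $Tr(A^h)=\sum_{i=1}^n\lambda_i^h=Tr(B^h)$ for every $h\in\mathbb N$.

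For the converse, which is the substantive direction, let $\lambda_1,\dots,\lambda_n$ and $\mu_1,\dots,\mu_n$ be the eigenvalues of $A$ and $B$ respectively, each listed with multiplicity. The hypothesis says that the power sums $p_h=\sum_{i=1}^n\lambda_i^h$ and $p_h'=\sum_{i=1}^n\mu_i^h$ agree for all $h\in\mathbb N$, in particular for $1\le h\le n$. Newton's identities, valid over a field of characteristic zero, express the elementary symmetric polynomials $e_1,\dots,e_n$ recursively in terms of $p_1,\dots,p_n$, so $e_k(\lambda_1,\dots,\lambda_n)=e_k(\mu_1,\dots,\mu_n)$ for every $k$. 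Hence the characteristic polynomials $\chi_A(x)=\prod_{i=1}^n(x-\lambda_i)=\sum_{k=0}^n(-1)^k e_k(\lambda)x^{n-k}$ and $\chi_B(x)$ coincide, so $A$ and $B$ have the same eigenvalues with the same multiplicities, i.e.\ $\sigma(A)=\sigma(B)$.

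There is no genuine obstacle here; the only points needing care are bookkeeping ones: the equality $\sigma(A)=\sigma(B)$ must be understood as an equality of multisets of size $n$ (equivalently, of characteristic polynomials), and one should remark that testing the finitely many values $h=1,\dots,n$ already suffices. As an alternative to the elementary argument, and in line with the statement, one may invoke Specht's theorem: for a Hermitian matrix every word in $M$ and $M^\ast$ collapses to a power of $M$, so Specht's criterion for unitary similarity reduces exactly to the equalities $Tr(A^h)=Tr(B^h)$ for all $h$, while unitary similarity of Hermitian matrices is equivalent to equality of spectra by the spectral theorem.
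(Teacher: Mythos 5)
Your proposal is correct and follows exactly the two routes the paper itself indicates: the paper merely cites Specht's theorem and remarks that elementary symmetric polynomials (via \cite{rob}) give an alternative elementary proof, and you have simply written out that Newton's-identities argument in full together with the Specht reduction. No discrepancy to report.
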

Alternatively, elementary symmetric polynomials can be used to prove that, for $A,B\in M_n(\mathbb C)$, one has that $Tr(A^h)=Tr(B^h)$ for $h=1,\ldots,n$ is equivalent to $\sigma(A)=\sigma(B)$, see \cite{rob}.

The following is one of the main results of the present paper, connecting trace, spectral radius and spectrum of a represented adjacency matrix of $(\Gamma,\Psi)$ to its balance.

\begin{theorem}\label{teo}
Let $G$ be a group and let $\pi\colon G\to U_k(\mathbb C)$ be a faithful unitary representation of $G$. Let $(\Gamma,\Psi)$ be a  $G$-gain graph and denote by $A^+$ the adjacency matrix of its underlying graph. The following are equivalent:
\begin{enumerate}
\item[(i)] $(\Gamma,\Psi)$ is balanced;
\item[(ii)]   the spectrum $\sigma(A_\pi)$ consists  of $k$ copies of the spectrum $\sigma(A^+).$
\end{enumerate}
Moreover, if $G$ is finite and $\Gamma$ is connected, the following are equivalent to (i) and (ii).
\begin{enumerate}
\item[(iii)] The spectral radius $\rho(\Gamma)$ occurs in $\sigma(A_\pi)$ with multiplicity $k$;
\item[(iv)] $\begin{cases}
Tr(A_\pi^{h})/Tr(|A|_\pi^{h})\to1\quad&\mbox{ if $\Gamma$ is not bipartite}\\
Tr(A_\pi^{2h})/Tr(|A|_\pi^{2h})\to 1\quad&\mbox{ if $\Gamma$ is  bipartite}.
\end{cases}$
\end{enumerate}
\end{theorem}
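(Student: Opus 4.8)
The plan is to prove all the stated equivalences through the cycle of implications (i) $\Rightarrow$ (ii) $\Rightarrow$ (iii) $\Rightarrow$ (iv) $\Rightarrow$ (i); the last three links require the additional hypotheses that $G$ is finite and $\Gamma$ is connected, while (i) $\iff$ (ii) holds in general. For (i) $\iff$ (ii), I would first observe that $|A|_\pi$ is obtained from the adjacency matrix $|A|$ of the trivially gained graph $(\Gamma,\bold{1}_G)$ by replacing each $1_G$ with $\pi(1_G)=I_k$, so $|A|_\pi=A^+\otimes I_k$ and $\sigma(|A|_\pi)$ is exactly $k$ copies of $\sigma(A^+)$. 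Both $A_\pi$ and $|A|_\pi$ are Hermitian by Proposition \ref{her}. Now Proposition \ref{ll} says that $(\Gamma,\Psi)$ is balanced if and only if $Tr(A_\pi^h)=Tr(|A|_\pi^h)$ for every $h\in\mathbb N$, and Lemma \ref{traccia} converts this trace condition, for Hermitian matrices, into the identity $\sigma(A_\pi)=\sigma(|A|_\pi)$; chaining the two gives (i) $\iff$ (ii).

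Assume henceforth $G$ finite and $\Gamma$ connected, and let $\mu_1\geq\cdots\geq\mu_n$ be the eigenvalues of $A^+$, so $\mu_1=\rho(\Gamma)$ has multiplicity $1$. Then (ii) $\Rightarrow$ (iii) is immediate, since in $k$ copies of $\sigma(A^+)$ the value $\rho(\Gamma)$ occurs exactly $k$ times. Before the remaining links I would record the a priori bound $\rho(A_\pi)\leq\rho(\Gamma)$: the computation in the proof of Proposition \ref{ll} (see Eq. \eqref{miserve}) gives $Tr(A_\pi^{2h})\leq Tr(|A|_\pi^{2h})=k\,Tr((A^+)^{2h})\leq nk\,\rho(\Gamma)^{2h}$, while $\rho(A_\pi)^{2h}\leq Tr(A_\pi^{2h})$ because $A_\pi$ is Hermitian; taking $2h$-th roots and letting $h\to\infty$ yields the bound. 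In particular every eigenvalue $\nu$ of $A_\pi$ satisfies $|\nu|\leq\rho(\Gamma)$, so $\pm\rho(\Gamma)$ are its only possible eigenvalues of modulus $\rho(\Gamma)$.

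For (iii) $\Rightarrow$ (iv) I would split according to bipartiteness. If $\Gamma$ is not bipartite, then $|\mu_i|<\mu_1$ for $i\geq 2$ and $Tr(|A|_\pi^h)=k|\mathcal C^h|\sim k\rho(\Gamma)^h$ by Eq. \eqref{exp}; if $-\mu_1$ were an eigenvalue of $A_\pi$, then $Tr(A_\pi^{2h})\geq(k+1)\mu_1^{2h}$ would contradict $Tr(A_\pi^{2h})\leq Tr(|A|_\pi^{2h})\sim k\mu_1^{2h}$ for $h$ large, so by (iii) the eigenvalues of $A_\pi$ of modulus $\mu_1$ are exactly $k$ copies of $\mu_1$ while all others have modulus $<\mu_1$; hence $Tr(A_\pi^h)\sim k\mu_1^h$ and the ratio tends to $1$. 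If $\Gamma$ is bipartite, with vertex parts $V_1\sqcup V_2$, then conjugating $A_\pi$ by the block-diagonal matrix whose $i$-th diagonal block is $I_k$ when $v_i\in V_1$ and $-I_k$ when $v_i\in V_2$ turns $A_\pi$ into $-A_\pi$, so $\sigma(A_\pi)=-\sigma(A_\pi)$ and $-\mu_1$ also occurs with multiplicity $k$; since every closed walk of $\Gamma$ has even length, $Tr(|A|_\pi^{2h})=k|\mathcal C^{2h}|\sim 2k\rho(\Gamma)^{2h}$ by Eq. \eqref{exp}, while the $2k$ eigenvalues $\pm\mu_1$ together with $|\nu|<\mu_1$ for the rest give $Tr(A_\pi^{2h})\sim 2k\mu_1^{2h}$, so again the ratio tends to $1$.

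Finally, for (iv) $\Rightarrow$ (i) I would return to Theorem \ref{thtraccia}. An immediate consequence of the computation in Eq. \eqref{miserve} is $Tr(|A|_\pi^h)-Tr(A_\pi^h)=\sum_{W\in\mathcal C_u^h}\big(k-\Re(\chi_\pi(\Psi(W)))\big)\geq\delta\,|\mathcal C_u^h|$, where $\delta:=\min\{k-\Re(\chi_\pi(g)):g\in G,\ g\neq 1_G\}>0$ by Lemma \ref{reale} — this is where finiteness of $G$ and faithfulness of $\pi$ enter. Dividing by $Tr(|A|_\pi^h)=k|\mathcal C^h|$, hypothesis (iv) forces $|\mathcal C_u^h|/|\mathcal C^h|\to 0$ (along all $h$, resp. along even $h$, in the non-bipartite, resp. bipartite, case), i.e. $Tr(A^h)/Tr(|A|^h)=|\mathcal C_b^h|/|\mathcal C^h|\to 1$, which is precisely condition (iii) of Theorem \ref{thtraccia}; that theorem then gives the balance of $(\Gamma,\Psi)$. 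I expect the main obstacle to be the case analysis inside (iii) $\Rightarrow$ (iv): one must \emph{exclude} $-\rho(\Gamma)$ from $\sigma(A_\pi)$ when $\Gamma$ is not bipartite and, dually, \emph{force} it to occur with full multiplicity $k$ when $\Gamma$ is bipartite, and both steps rest on combining the a priori bound $\rho(A_\pi)\leq\rho(\Gamma)$ with the precise asymptotics of $|\mathcal C^h|=Tr((A^+)^h)$ recalled in Eq. \eqref{exp}.
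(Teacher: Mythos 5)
Your proposal is correct and follows essentially the same route as the paper: (i)$\iff$(ii) via Proposition \ref{ll} and Lemma \ref{traccia} together with $|A|_\pi=A^+\otimes I_k$, (iii)$\implies$(iv) via the a priori bound $\rho(A_\pi)\leq\rho(\Gamma)$ from Eq. \eqref{miserve} combined with the asymptotics of Eq. \eqref{exp}, and (iv)$\implies$(i) via the gap $\delta>0$ from Lemma \ref{reale} and Theorem \ref{thtraccia}. The only notable difference is in the bipartite sub-case of (iii)$\implies$(iv), where the paper merely asserts that $\nu_p=-\nu_1$ with $m_1=m_p=k$ "by an analogous argument," whereas you supply an explicit and correct justification via conjugation by the block-diagonal $\pm I_k$ signature matrix, which forces $\sigma(A_\pi)=-\sigma(A_\pi)$.
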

\begin{proof}
${}$\\
(i)$\iff$(ii)\\
By Proposition \ref{ll}, the gain graph $(\Gamma,\Psi)$ is balanced if and only if $Tr\left((A_\pi)^h\right)\!=\!Tr\left((|A|_\pi)^h\right)$ for all $h\in \mathbb N$. By  Lemma \ref{traccia}, this is equivalent to $\sigma(A_\pi)=\sigma(|A|_\pi)$. Finally, being $|A|_\pi= A^+\otimes I_k$, the spectrum $\sigma(|A|_\pi)$ consists of $k$ copies of $\sigma(A^+)$ and the claim follows.\\
(ii)$\implies$(iii)\\
It is obvious. \\
(iii)$\implies$(iv)\\
By virtue of Eq. \eqref{miserve} we have
\begin{equation}\label{contra}
Tr\left((A_\pi)^h\right)\leq Tr\left((|A|_\pi)^h\right),\qquad \forall h\in \mathbb N.
\end{equation}
Let $\nu_1>\nu_2>\cdots>\nu_p$ be the eigenvalues of $A_\pi$ with multiplicity $m_1,\ldots, m_p$, respectively.
Since  $|A|_\pi= A^+\otimes I_k$ we have that  $\rho(|A|_\pi)=\rho(A^+)=\rho(\Gamma)$.
Since $\Gamma$ is connected, we have that $\rho(\Gamma)$ is an eigenvalue of $\Gamma$ of multiplicity $1$, and so it is an eigenvalue for $|A|_\pi$ of multiplicity $k$.
Moreover, the graph $\Gamma$ is bipartite if and only if also  $-\rho(\Gamma)$ is eigenvalue for $|A|_\pi$ with multiplicity $k$. Clearly
\begin{equation}\label{tracciaproof}
Tr\left((A_\pi)^h\right)=\sum_{i=1}^p m_i \nu_i^h,\qquad\qquad
Tr\left((|A|_\pi)^h\right)= k Tr\left((A^+)^h\right)= k |\mathcal C^h|.
\end{equation}
By Eq. \eqref{contra} we have:
\begin{equation}\label{spectralradius}
\rho(A_\pi)=\max\{|\nu_1|,|\nu_p|\}=\lim_{h\to\infty} \sqrt[2h]{Tr((A_\pi)^{2h})}\leq \lim_{h\to\infty} \sqrt[2h]{(Tr(|A|_\pi)^{2h})}=\rho(\Gamma).
\end{equation}
Since by hypothesis $\rho(\Gamma)$ is an eigenvalue of $A_\pi$ with multiplicity $k$, Eq. \eqref{spectralradius} implies $\nu_1 = \rho(\Gamma)$ and $m_1=k$.\\ Suppose that $\Gamma$ is not bipartite. From Eq. \eqref{contra} we have:
\begin{align*}
0&\leq \frac{m_p \nu_p^{2h}}{k \rho(\Gamma)^{2h}}\cdot \frac{k \rho(\Gamma)^{2h}}{(Tr(|A|_\pi)^{2h})} = \frac{m_p \nu_p^{2h}}{(Tr(|A|_\pi)^{2h})} \leq \frac{Tr\left((A_\pi)^{2h}\right)-k \rho(\Gamma)^{2h}}{(Tr(|A|_\pi)^{2h})}\\&\leq \frac{Tr\left((|A|_\pi)^{2h}\right)-k \rho(\Gamma)^{2h}}{(Tr(|A|_\pi)^{2h})} = 1 -  \frac{k \rho(\Gamma)^{2h}}{(Tr(|A|_\pi)^{2h})}.
\end{align*}
Eqs. \eqref{exp} and \eqref{tracciaproof} imply  $\frac{k \rho(\Gamma)^{2h}}{(Tr(|A|_\pi)^{2h})}\to 1$, and therefore it must be $\frac{m_p \nu_p^{2h}}{k \rho(\Gamma)^{2h}}\to 0$.
This is possible only if $|\nu_p|<\rho(\Gamma)$, and then $|\nu_i |<\rho(\Gamma)$ for $i=2,\ldots,p$. Then it follows, by Eq. \eqref{tracciaproof} and Eq. \eqref{exp}:
$$
\frac{Tr(A_\pi^{h})}{Tr(|A|_\pi^{h})}=\frac{Tr(A_\pi^{h})}{k \rho(\Gamma)^{h}}\cdot \frac{k \rho(\Gamma)^{h}}{Tr(|A|_\pi^{h})}= \frac{k \rho(\Gamma)^{h}+ \sum_{i=2}^p m_i \nu_i^h}{k \rho(\Gamma)^{h}} \cdot \frac{k \rho(\Gamma)^{h}}{k |\mathcal C^h|}   \to 1.
$$
In the bipartite case, one can prove that $\nu_p=-\nu_1$ and $m_1 = m_p=k$, from which we derive the claim, by using an analogous argument.\\
(iv)$\implies$(i)\\
Let us put
\begin{equation*}
\delta:= \min \{ k-\Re(\chi_\pi(g)): g\neq1_G\};
\end{equation*}
notice that, since $\pi$ is faithful, by Lemma \ref{reale}, we have $k-\Re(\chi_\pi(g))>0$ and so, being $G$ finite, we have $\delta>0$. Then, by  Eq. \eqref{miserve}, we have
\begin{equation}\label{mag}
Tr\left((|A|_\pi)^h\right)-Tr\left((A_\pi)^h\right)\geq |{\mathcal C}_u^h|\delta.
\end{equation}
Since $Tr\left((|A|_\pi)^h\right)=k |{\mathcal C}^h|$, Eq. \eqref{mag} gives, in the non-bipartite case:
$$\frac{|{\mathcal C}_u^h|}{|{\mathcal C}^h|}\frac{\delta}{k}\leq\frac{Tr\left((|A|_\pi)^h\right)-Tr\left((A_\pi)^h\right)}{Tr\left((|A|_\pi)^h\right)}=\left(1-\frac{Tr\left((A_\pi)^h\right)}{Tr\left((|A|_\pi)^h\right)}\right)\to 0,$$
which implies $\frac{|{\mathcal C}_u^h|}{|{\mathcal C}^h|}\to 0$. An analogous argument for the bipartite case gives  $\frac{|{\mathcal C}_u^{2h}|}{|{\mathcal C}^{2h}|}\to 0$. By Theorem \ref{thtraccia} the balance of $(\Gamma,\Psi)$ is proved.
\end{proof}

\begin{example}\label{exquaternionibello}\rm
Consider the balanced gain graph in Example \ref{esempio1quaternioni}, and the $2$-dimensional unitary faithful representation $\pi$ of $Q_8$ given in Table \ref{tableQ8}.
\begin{table}
\begin{tabular}{|c|c|c|c|}
\hline
$\pm 1$ & $\pm i$ & $\pm j$ & $\pm k$ \\
\hline
$\pm \left(
   \begin{array}{cc}
     1 & 0 \\
     0 & 1  \\
   \end{array}
 \right)$  &  $\pm\left(
   \begin{array}{cc}
     0 & -1 \\
     1 & 0  \\
   \end{array}
 \right)$ & $\pm \left(
   \begin{array}{cc}
     0 & i \\
     i & 0  \\
   \end{array}
 \right)$ & $\left(
   \begin{array}{cc}
     -i & 0 \\
     0 & i  \\
   \end{array}
 \right)$  \\
\hline
\end{tabular}
\smallskip
\caption{The representation $\pi$ of $Q_8$ of Example \ref{exquaternionibello}.}\label{tableQ8}
\end{table}
A direct computation gives:
$$
A^+ = \left(
        \begin{array}{cccc}
          0 & 1 & 1 & 1 \\
          1 & 0 & 1 & 0 \\
          1 & 1 & 0 & 1 \\
          1 & 0 & 1 & 0 \\
        \end{array}
      \right)   \qquad \sigma(A^+) = \left\{-1,0 , \frac{1\pm \sqrt{17}}{2}\right\}
$$
$$
A_\pi = \left(
        \begin{array}{c|c|c|c}
          O_{2,2} & \pi(-k) & \pi(i) & \pi(k) \\  \hline
          \pi(k) & O_{2,2} & \pi(j) & O_{2,2} \\  \hline
          \pi(-i) & \pi(-j) & O_{2,2} & \pi(j) \\  \hline
          \pi(-k) & O_{2,2} & \pi(-j) & O_{2,2} \\
        \end{array}
      \right)   \quad \sigma(A_\pi) = \left\{(-1)^{2},0^{2}, \left(\frac{1\pm \sqrt{17}}{2}\right)^{2}\right\},
$$
where the exponent  is the multiplicity of the corresponding eigenvalue.
\end{example}

\begin{remark}\rm
Theorem \ref{teo} can be used every time we have a unitary faithful representation of $G$ of finite degree. This is always possible for finite groups, e.g., by using the left regular representation.
Among compact groups, the existence of  a (strongly continuous) finite dimensional faithful representation characterizes  Lie groups (see \cite[Theorem~5.13]{folland}).
Notice that if $G$ is a subgroup of $K$,  a gain graph $(\Gamma,\Psi)$ on the group $G$ can be seen also as a $K$-gain graph, and the balance in $G$ is equivalent to the balance in $K$.
This implies that we can always restrict to the case in which $G$ is generated by its finite subset  $\{ \Psi(u,v):(u,v) \in E_\Gamma\}$. Moreover, even if a unitary representation $\pi$ of finite degree is not faithful, we can apply Theorem \ref{teo} to the associated gain graph on the group $G/\ker (\pi)$.
\end{remark}

\subsection{The case $G=\mathbb T$ with the canonical representation}
Consider the complex unit group $\mathbb T=\{z\in \mathbb C: |z|=1\}$ of the unitary elements of $\mathbb C$ endowed with the natural product. On this particular group a gain graph has a natural adjacency matrix in $M_n(\mathbb C)$, which was already considered in \cite{reff1}.
Therein it is also shown that if a $\mathbb T$-gain graph is balanced, then such adjacency matrix is cospectral with the adjacency matrix of the underlying graph.
Zaslavsky  in \cite{zasbib} asked for the converse implication, that is the generalization on $\mathbb T$-gain graphs of the main result in \cite{acharya} on signed graphs. In \cite[Theorem~4.6]{adun} this generalization is given. However, we present it with an independent proof as a corollary of Theorem \ref{teo}.
\begin{corollary}\label{toro}
A gain graph on $\mathbb T$ is balanced if and only if its adjacency matrix is cospectral with the adjacency matrix of its underlying graph.
\end{corollary}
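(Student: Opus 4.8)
The plan is to specialize Theorem \ref{teo} to the group $G=\mathbb T$ equipped with its \emph{canonical representation}, namely the inclusion $\pi\colon \mathbb T\hookrightarrow U_1(\mathbb C)$ given by $\pi(z)=z$. First I would record that $\pi$ is a unitary representation of degree $k=1$, and that it is faithful, since $\pi(z)=1$ forces $z=1$, so that $\ker(\pi)=\{1\}$. Hence Theorem \ref{teo} is applicable to any $\mathbb T$-gain graph $(\Gamma,\Psi)$.

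Next I would identify the represented adjacency matrix in this case. Since $\pi$ sends each $z\in\mathbb T$ to the $1\times 1$ matrix $(z)$ and each $0\in\mathbb C\mathbb T$ to the zero block of size $1$, Definition \ref{fouriermatrici} and Proposition \ref{blocchi} give that $A_\pi=\widehat{A_{(\Gamma,\Psi)}}(\pi)\in M_n(\mathbb C)$ is exactly the natural complex adjacency matrix of the $\mathbb T$-gain graph considered in \cite{reff1}; likewise $|A|_\pi=A^+$ is the adjacency matrix of the underlying graph $\Gamma$. With $k=\deg(\pi)=1$, clause (ii) of Theorem \ref{teo} states precisely that $\sigma(A_\pi)$ consists of one copy of $\sigma(A^+)$, i.e.\ that $A_\pi$ and $A^+$ are cospectral. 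The equivalence (i)$\iff$(ii) of Theorem \ref{teo} then yields the corollary.

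There is essentially no obstacle here: the entire content is already packaged in Theorem \ref{teo}, and the only points requiring a line of verification are that the canonical representation is faithful and unitary (immediate) and that the Fourier transform at the degree-one representation $\pi$ reproduces the classical complex adjacency matrix (also immediate from the definitions). The one conceptual remark worth stressing is that, although $\mathbb T$ is infinite, the equivalence (i)$\iff$(ii) of Theorem \ref{teo} only requires a faithful unitary representation of \emph{finite} degree, which $\pi$ supplies; the finiteness of $G$ is needed only for clauses (iii) and (iv), which play no role in this corollary.
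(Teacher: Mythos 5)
Your proposal is correct and follows exactly the paper's own argument: both identify the canonical inclusion $\pi_{id}\colon\mathbb T\to U_1(\mathbb C)$ as a faithful unitary representation of degree $1$, note that $A_{\pi_{id}}$ is the classical complex adjacency matrix of the $\mathbb T$-gain graph, and invoke the equivalence (i)$\iff$(ii) of Theorem \ref{teo}. Your closing remark that only this equivalence (which does not require $G$ finite) is being used is a correct and worthwhile clarification, but the route is the same as the paper's.
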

\begin{proof}
There is a canonical isomorphism  $\pi_{id} \colon \mathbb T\to U_1(\mathbb C)$, associating  with $z$  the $1\times 1$ matrix $(z)$: in particular, the map $\pi_{id}$ is a faithful unitary representation of degree $1$.
For a $\mathbb T$-gain graph $(\Gamma,\Psi)$, with adjacency matrix $A\in M_n(\mathbb C \mathbb T)$, the matrix $A_{\pi_{id}}$ corresponds to the classical adjacency matrix for the complex unit gain graph (in the sense of \cite{reff1}). The claim follows by using Theorem \ref{teo}.
\end{proof}

The finite cyclic group $G=\{1,\xi,\xi^2, \ldots, \xi^{k-1} \}$ naturally embeds in $\mathbb T$ via $\xi^\ell \mapsto e^\frac{2\pi i\ell}{k}$. Then we can apply Corollary \ref{toro} about gain graphs on $\mathbb{T}$. Notice that, when $G$ is the multiplicative group of a finite field, Corollary \ref{toro} can be compared with the main result in \cite{shahulgermina}, where the entries of the adjacency matrix and the coefficients of its characteristic polynomial live in the associated finite field.
When $k=2$, as already mentioned, Corollary \ref{toro} implies the main result in \cite{acharya} for signed graphs.
\\Moreover, when we consider a connected gain graph $(\Gamma,\Psi)$ on a finite subgroup of $\mathbb T$, also the two last equivalences in Theorem \ref{teo} hold.
Let us denote by $\lambda_1(\Gamma,\Psi)$ the largest eigenvalue of $A_{\pi_{id}}$ (sometimes addressed as the \emph{index} of $(\Gamma,\Psi)$)
and with $\lambda_1(\Gamma)$ the largest eigenvalue of the underlying graph $\Gamma$.

\begin{corollary}\label{primoautovalore}
A connected gain graph $(\Gamma,\Psi)$ on a finite subgroup of $\mathbb T$ is balanced if and only if $\lambda_1(\Gamma,\Psi)=\lambda_1(\Gamma)$.
\end{corollary}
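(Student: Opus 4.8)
The plan is to read the statement off Theorem~\ref{teo}, specialised to the faithful unitary representation $\pi_{id}\colon G\hookrightarrow\mathbb T\cong U_1(\mathbb C)$ of degree $k=1$ used in Corollary~\ref{toro}: for this choice $A_{\pi_{id}}\in M_n(\mathbb C)$ is the usual Hermitian adjacency matrix, and $\lambda_1(\Gamma,\Psi)=\lambda_1(A_{\pi_{id}})$. Since $G$ is finite and $\Gamma$ is connected, the equivalence (i)$\iff$(iii) of Theorem~\ref{teo} says that $(\Gamma,\Psi)$ is balanced if and only if $\rho(\Gamma)$ occurs in $\sigma(A_{\pi_{id}})$ with multiplicity $1$. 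So it remains to show that, for a connected $\mathbb T$-gain graph, $\lambda_1(\Gamma,\Psi)=\lambda_1(\Gamma)$ is equivalent to $\rho(\Gamma)$ being a simple eigenvalue of $A_{\pi_{id}}$.

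Next I would record the easy estimates. Because $A^{+}$ is a nonnegative symmetric matrix and $\Gamma$ is connected, Perron--Frobenius gives $\lambda_1(\Gamma)=\rho(\Gamma)$, a simple eigenvalue with strictly positive eigenvector. Since the nonzero entries of $A_{\pi_{id}}$ lie on $\mathbb T$, we have $|A_{\pi_{id}}|=A^{+}$ entrywise, hence $\rho(A_{\pi_{id}})\le\rho(A^{+})=\rho(\Gamma)$ (equivalently, use $Tr(A_{\pi_{id}}^{2h})\le Tr((A^{+})^{2h})$ from Eq.~\eqref{miserve}, as in Eq.~\eqref{spectralradius}). Consequently
\[
\lambda_1(\Gamma,\Psi)=\lambda_1(A_{\pi_{id}})\le\rho(A_{\pi_{id}})\le\rho(\Gamma)=\lambda_1(\Gamma),
\]
so $\lambda_1(\Gamma,\Psi)=\lambda_1(\Gamma)$ holds precisely when $\rho(\Gamma)\in\sigma(A_{\pi_{id}})$, in which case $\rho(\Gamma)$ is the top eigenvalue of $A_{\pi_{id}}$.

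The crux is to show that $\rho(\Gamma)\in\sigma(A_{\pi_{id}})$ already forces simplicity; the argument I have in mind in fact yields balance directly. Take a unit eigenvector $x=(x_1,\dots,x_n)$ of $A_{\pi_{id}}$ for $\rho(\Gamma)$ and write $\Psi(v_i,v_j)=e^{\mathrm i\psi_{ij}}$, $x_i=|x_i|e^{\mathrm i\theta_i}$. Since $A_{\pi_{id}}$ is Hermitian (Proposition~\ref{her}),
\[
\rho(\Gamma)=x^{*}A_{\pi_{id}}x=\sum_{(v_i,v_j)\in E_\Gamma}\overline{x_i}\,e^{\mathrm i\psi_{ij}}\,x_j,
\]
and, this being the positive number $\rho(\Gamma)$,
\[
\rho(\Gamma)=\bigl|x^{*}A_{\pi_{id}}x\bigr|\le\sum_{(v_i,v_j)\in E_\Gamma}|x_i|\,|x_j|=|x|^{\mathsf{T}}A^{+}|x|\le\rho(A^{+})\,\|x\|^{2}=\rho(\Gamma).
\]
Hence all inequalities are equalities. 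The Rayleigh-bound equality $|x|^{\mathsf{T}}A^{+}|x|=\rho(A^{+})\|x\|^{2}$ forces $|x|$ to be a unit Perron eigenvector of $A^{+}$, hence strictly positive since $\Gamma$ is connected; the triangle-inequality equality, together with the sum being positive, forces every summand $\overline{x_i}e^{\mathrm i\psi_{ij}}x_j$ to equal $|x_i||x_j|>0$, i.e.\ $\psi_{ij}\equiv\theta_i-\theta_j\pmod{2\pi}$ on every edge. Then, for any closed walk $C=v_{i_0},v_{i_1},\dots,v_{i_h}=v_{i_0}$,
\[
\Psi(C)=\prod_{t=0}^{h-1}e^{\mathrm i\psi_{i_t i_{t+1}}}=\prod_{t=0}^{h-1}e^{\mathrm i(\theta_{i_t}-\theta_{i_{t+1}})}=e^{\mathrm i(\theta_{i_0}-\theta_{i_h})}=1,
\]
so $(\Gamma,\Psi)$ is balanced (and, by Theorem~\ref{teo}, $\rho(\Gamma)$ is then simple in $\sigma(A_{\pi_{id}})$). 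For the converse: if $(\Gamma,\Psi)$ is balanced then $A_{\pi_{id}}$ is cospectral with $A^{+}$ by Corollary~\ref{toro}, so $\lambda_1(\Gamma,\Psi)=\lambda_1(\Gamma)$.

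I expect the only delicate point to be this Perron--Frobenius step: deducing from the two equalities that $|x|$ is the \emph{strictly positive} Perron vector — so that the phases $\theta_i$ are all well defined — and that the triangle-inequality equality yields the pointwise relation $\psi_{ij}\equiv\theta_i-\theta_j$; the rest is bookkeeping. It is worth noting that this argument uses nothing about $G$ being finite except for the appeal to Theorem~\ref{teo}~(iii), and would go through for an arbitrary subgroup of $\mathbb T$.
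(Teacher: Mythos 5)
Your proof is correct, but the decisive direction is handled by a genuinely different argument from the paper's. The paper's proof is a two-line reduction: balance gives cospectrality by Corollary~\ref{toro}, hence $\lambda_1(\Gamma,\Psi)=\lambda_1(\Gamma)$; conversely, $\lambda_1(\Gamma,\Psi)=\lambda_1(\Gamma)$ puts $\rho(\Gamma)$ in $\sigma(A_{\pi_{id}})$, and since $\deg(\pi_{id})=1$ the equivalence (i)$\iff$(iii) of Theorem~\ref{teo} yields balance — so the paper ultimately leans on the trace-asymptotics machinery (iii)$\implies$(iv)$\implies$(i) of Theorems~\ref{teo} and~\ref{thtraccia}, which is exactly where finiteness of the subgroup is used (to get $\delta>0$). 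You set up the same reduction and the same inequality $\lambda_1(A_{\pi_{id}})\le\rho(A_{\pi_{id}})\le\rho(\Gamma)$, but then replace the appeal to Theorem~\ref{teo} by a self-contained Perron--Frobenius/Rayleigh-quotient argument on a top eigenvector: equality in $|x^{*}A_{\pi_{id}}x|\le |x|^{\mathsf T}A^{+}|x|\le\rho(\Gamma)$ forces $|x|$ to be the strictly positive Perron vector and forces $\psi_{ij}\equiv\theta_i-\theta_j$ on every edge, i.e.\ the phases form a potential function, whence balance. This buys two things: it removes the finiteness hypothesis on the subgroup of $\mathbb T$ (as you note), and it cleanly disposes of a point the paper's citation glosses over, namely that condition (iii) of Theorem~\ref{teo} literally asks for $\rho(\Gamma)$ to occur with multiplicity exactly $k=1$ rather than merely to occur — your eigenvector argument shows that mere occurrence already implies balance, and hence simplicity a posteriori. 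All the steps (Rayleigh equality forcing the Perron vector, triangle-inequality equality forcing each summand to be the nonnegative real $|x_i||x_j|$, the double counting of each unordered edge being consistent on both sides) check out.
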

\begin{proof}
If $(\Gamma,\Psi)$ is balanced, then by Corollary \ref{toro} the adjacency matrix of $(\Gamma,\Psi)$ and the adjacency matrix of the underlying graph $\Gamma$ are cospectral, in particular $\lambda_1(\Gamma,\Psi)=\lambda_1(\Gamma)$. Vice versa, since $\lambda_1(\Gamma)=\rho(\Gamma)$, if $\lambda_1(\Gamma,\Psi)=\lambda_1(\Gamma)$ then $\rho(\Gamma)$ is in the spectrum of
$A_{\pi_{id}}$. Since the degree of $\pi_{id}$ is $1$, by virtue of Theorem \ref{teo} the gain graph $(\Gamma,\Psi)$ is balanced.
\end{proof}
Notice that, when  $\Gamma$ is regular, Corollary \ref{primoautovalore} is equivalent to the fact that a connected graph $(\Gamma,\Psi)$ is balanced if and only if the Laplacian matrix is not invertible. As observed in \cite[Corollary~3.4]{reff1}, this can be proved, even for non-regular $\mathbb T$-gain graph,  from \cite[Theorem~2.1]{zas4}. In the case $G=\{1,-1\}$ Corollary \ref{primoautovalore} implies that for  a connected graph $\Gamma$, the signatures $\Psi$ inducing balanced signed graphs are exactly those with maximal index. For more sophisticated bounds in this particular case see \cite{stanic2, stanic1}.

\subsection{The case of finite groups}
If $G$ is a finite group, we have faithful unitary representations of finite degree (e.g., the left regular representation $\lambda_G$). Moreover, the complete reducibility of representations (Eq. \eqref{deco}) ensures that it is enough to check balance only at irreducible representations.
\begin{corollary}\label{bfiniti}
Let $G$ be a finite group and  let $(\Gamma,\Psi)$ be a $G$-gain graph, with adjacency matrix $A\in M_n(\mathbb C G)$.  Let $A^+\in M_n(\mathbb C)$  be the adjacency matrix of the underlying graph.
The following are equivalent:
\begin{enumerate}
\item[(i)] $(\Gamma,\Psi)$ is balanced;
\item[(ii)] $\sigma(A_\pi)= \deg(\pi) \sigma( A^+)$ for every unitary irreducible representation $\pi$.
\end{enumerate}
Morover, if $\Gamma$ is connected, also the following is equivalent to (i) and (ii):
\begin{enumerate}
\item[(iii)]   $\lambda_1(A_\pi)=\lambda_1(A^+)$ with multiplicity $\deg(\pi)$, for every unitary irreducible representation $\pi$.
\end{enumerate}
\end{corollary}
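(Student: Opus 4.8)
The plan is to derive the statement from Theorem~\ref{teo} by collapsing a complete system of irreducible representations into a single faithful one. Fix a complete system $\pi_0,\ldots,\pi_{m-1}$ of irreducible unitary representations of the finite group $G$ and set $\pi:=\bigoplus_{i=0}^{m-1}\pi_i$, a unitary representation of finite degree $k:=\sum_{i=0}^{m-1}\deg(\pi_i)$. The crucial preliminary remark is that $\pi$ is \emph{faithful}: indeed $\ker(\pi)=\bigcap_i\ker(\pi_i)=\ker(\lambda_G)=\{1_G\}$, using the decomposition \eqref{regdec} of the regular representation and its faithfulness. By iterated use of Proposition~\ref{productfou}, $A_\pi=\widehat A(\pi)$ is similar to $\bigoplus_{i=0}^{m-1}\widehat A(\pi_i)=\bigoplus_{i=0}^{m-1}A_{\pi_i}$, so $\sigma(A_\pi)=\bigsqcup_{i=0}^{m-1}\sigma(A_{\pi_i})$ as multisets, and the multiplicity of any value in $\sigma(A_\pi)$ is the sum of its multiplicities in the $\sigma(A_{\pi_i})$. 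This identity is the bridge between Theorem~\ref{teo}, which speaks of the single faithful $\pi$, and the statement here, phrased over all irreducibles.

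For (i)$\implies$(ii) I would argue one representation at a time, so that faithfulness of each individual $\pi_i$ is not needed. If $(\Gamma,\Psi)$ is balanced then $(\Gamma,\Psi)\sim(\Gamma,\bold{1}_G)$, so Theorem~\ref{swe} provides a diagonal $F\in M_n(\mathbb C G)$ with $F_{j,j}\in G$ and $F^*AF=|A|$. Applying the Fourier transform at $\pi_i$, Proposition~\ref{productfou} yields $\widehat F(\pi_i)^*A_{\pi_i}\widehat F(\pi_i)=|A|_{\pi_i}=A^+\otimes I_{\deg(\pi_i)}$, while Proposition~\ref{blocchi} shows $\widehat F(\pi_i)$ is block-diagonal with diagonal blocks $\pi_i(F_{j,j})\in U_{\deg(\pi_i)}(\mathbb C)$, hence unitary. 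Thus $A_{\pi_i}$ is unitarily similar to $A^+\otimes I_{\deg(\pi_i)}$, whose spectrum is $\deg(\pi_i)$ copies of $\sigma(A^+)$; this is precisely (ii). (One could also note that for balanced $(\Gamma,\Psi)$ every closed walk $W$ satisfies $\chi_{\pi_i}(\Psi(W))=\deg(\pi_i)$, so Eq.~\eqref{miserve} gives $Tr((A_{\pi_i})^h)=Tr((|A|_{\pi_i})^h)$ for all $h$ and Lemma~\ref{traccia} concludes.)

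For (ii)$\implies$(i), assume $\sigma(A_{\pi_i})=\deg(\pi_i)\,\sigma(A^+)$ for every $i$. The multiset identity above then gives $\sigma(A_\pi)=\bigsqcup_i\deg(\pi_i)\,\sigma(A^+)=k\,\sigma(A^+)$, i.e.\ $\sigma(A_\pi)$ consists of $k=\deg(\pi)$ copies of $\sigma(A^+)$. Since $\pi$ is faithful, unitary and of finite degree, Theorem~\ref{teo}, implication (ii)$\implies$(i), shows $(\Gamma,\Psi)$ is balanced. This already settles (i)$\iff$(ii), with no connectedness needed.

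It remains to handle (iii) under the hypothesis that $\Gamma$ is connected, which I would do via (ii)$\implies$(iii)$\implies$(i). Each $A_{\pi_i}$ is Hermitian by Proposition~\ref{her}, so its index $\lambda_1(A_{\pi_i})$ is defined; and since $\Gamma$ is connected, $\lambda_1(A^+)=\rho(\Gamma)$ is a \emph{simple} eigenvalue of $A^+$ (the inequality $\mu_1>\mu_2$ recalled before Eq.~\eqref{exp}). Under (ii), $\sigma(A_{\pi_i})$ is $\deg(\pi_i)$ copies of $\sigma(A^+)$, so $\rho(\Gamma)$ is the largest eigenvalue of $A_{\pi_i}$ with multiplicity $\deg(\pi_i)$, giving (iii). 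Conversely, under (iii) the value $\rho(\Gamma)$ occurs in each $\sigma(A_{\pi_i})$ with multiplicity $\deg(\pi_i)$, hence in $\sigma(A_\pi)$ with multiplicity $\sum_i\deg(\pi_i)=k$; as $G$ is finite and $\Gamma$ connected, Theorem~\ref{teo}, implication (iii)$\implies$(i), gives balance. I do not anticipate a genuine obstacle here: the whole argument is a reduction to Theorem~\ref{teo} resting on the two routine facts that $\bigoplus_i\pi_i$ is faithful and that the Fourier transform sends this direct sum to a block-diagonal matrix; the only point demanding a little care is the correct bookkeeping of eigenvalue multiplicities across that block-diagonal decomposition.
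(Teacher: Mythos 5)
Your proof is correct, but it takes a genuinely different route from the paper's. The paper proves (ii)$\implies$(i) by applying Theorem~\ref{teo} once per irreducible $\pi_i$ after passing to the quotient group $G/\ker(\pi_i)$, on which $\pi_i$ becomes faithful: condition (ii) then forces $\Psi(W)\in\ker(\pi_i)$ for every closed walk $W$, and balance in $G$ follows from $\bigcap_{\pi\ \text{irreducible}}\ker(\pi)=\{1_G\}$; the equivalence with (iii) is dispatched by the same quotient argument. You instead aggregate the complete system into the single faithful representation $\pi=\bigoplus_i\pi_i$ and invoke Theorem~\ref{teo} exactly once, using Proposition~\ref{productfou} to identify $\sigma(A_\pi)$ with $\bigsqcup_i\sigma(A_{\pi_i})$ as multisets; the kernel-intersection fact enters your argument only to certify faithfulness of $\pi$ (via Eq.~\eqref{regdec}). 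The two reductions are logically equivalent, but yours avoids the mildly informal step of re-reading $(\Gamma,\Psi)$ as a gain graph over each quotient group, at the price of the multiplicity bookkeeping across the block-diagonal decomposition --- which you handle correctly, including the point that under (iii) the multiplicity of $\rho(\Gamma)$ in $\sigma(A_\pi)$ is exactly $\sum_i\deg(\pi_i)=\deg(\pi)$, as Theorem~\ref{teo}(iii) requires. Your direct argument for (i)$\implies$(ii) via Theorem~\ref{swe} and the unitarity of the block-diagonal matrix $\widehat F(\pi_i)$ is also a self-contained alternative to the paper's appeal to Theorem~\ref{teo}, and it makes explicit that this direction needs neither faithfulness of the individual $\pi_i$ nor connectedness of $\Gamma$.
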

\begin{proof}${}$\\
(i)$\iff$(ii)\\
If $\sigma(A_\pi)= \deg(\pi) \sigma( A^+)$, by virtue of Theorem \ref{teo}, the graph $(\Gamma,\Psi)$ is balanced for gain in the group $G/\ker (\pi)$. This implies that, for every  $W\in \mathcal C^h$, we have
$$
\Psi(W)\in \bigcap_{\pi \mbox{ {\tiny irreducible}}} \ker (\pi)=\{1_G\},
$$
that is equivalent to the balance of $(\Gamma,\Psi)$ in $G$. Vice versa, if $(\Gamma,\Psi)$ is balanced for $G$, it is balanced for every quotient group $G/\ker (\pi)$ and, combining again with Theorem \ref{teo}, the claim is proved.
\\By an  analogous  argument, using Theorem \ref{teo}, the equivalence (i) $\iff$ (iii) follows.
\end{proof}

\begin{remark}\label{abelian}\rm
If $(\Gamma,\Psi)$ is a gain graph on a finite Abelian group $G$, each irreducible representation $\pi$  has degree $1$ and, by taking $\pi$ also unitary, we have $A_\pi\in M_n(\mathbb T)$. In particular, $A_\pi$ represents the adjacency matrix of a $\mathbb T$-gain graph $(\Gamma,\pi\circ \Psi)$, obtained from  $(\Gamma,\Psi)$ by replacing each gain $\Psi(u,v)\in G$ with its character $\chi_\pi(\Psi(u,v))$. Under these assumptions, the graph $(\Gamma,\Psi)$ is balanced if and only if $A_\pi$ is cospectral with $A^+$ for each irreducible representation of $G$. Moreover if $\Gamma$ is connected, the $G$-gain graph $(\Gamma,\Psi)$ is balanced if and only if $\lambda_1(\Gamma,\pi\circ \Psi)=\lambda_1(\Gamma)$ for each  irreducible representation of $G$.
\end{remark}

\section{Adjacency matrix of the cover graph}\label{cover}
A natural and useful tool in the investigation of a $G$-gain graph $(\Gamma,\Psi)$ is the  \emph{cover graph $L(\Gamma,\Psi)$}, which is a non-gain graph on $n|G|$ vertices (with $|V_\Gamma|=n$) that, together with a labelling of the vertices, contains all information about $(\Gamma,\Psi)$.
Actually in topological graph theory, a \emph{permutation voltage  assignment}, that is the analogue of the gain function in our context, is defined exactly in order to characterize a graph covering \cite{gross}; for a gain graph point of view, where the switching equivalence plays a fundamental role, see \cite[Section~9]{zasign}.
In the recent literature the cover graph of a $G$-gain graph is defined, for example, in \cite{dalfo,zasglos}: in both cases the multiplication of the gain is on the right. However, we are going to define the cover graph via left multiplication, consistently with Section \ref{circulant} and with the choice of using the left regular representation $\lambda_G$. Anyway, all the results of this section can be adapted for the classical definition by using the \emph{right regular representation}.

\begin{definition}\label{coverdef}
The \emph{cover graph $L(\Gamma,\Psi)$} of the gain graph $(\Gamma,\Psi)$ is the non-oriented graph with vertex set given by $V_\Gamma\times G$, such that, for every $u,v\in V_\Gamma$ and $g,h\in G$ we have:
$$
(u,g)\sim (v,h) \iff (u,v)\in E_\Gamma \mbox{ and }  h=\Psi(u,v)^{-1} g.
$$
\end{definition}

An order on $G=\{g_1,g_2,\ldots, g_k\}$ induces a lexicographic order on the vertex set
$$
V_\Gamma\times G= \{(v_1,g_1),(v_1,g_2),\ldots, (v_n,g_{k-1} ), (v_n,g_k)\}
$$
and defines an adjacency matrix of $L(\Gamma,\Psi)$. The following lemma holds.

\begin{lemma}\label{coverad}
The adjacency matrix of $L(\Gamma,\Psi)$ is $A_{\lambda_G}$, i.e., the Fourier transform of the adjacency matrix of $(\Gamma,\Psi)$ at the left regular representation $\lambda_G$.
\end{lemma}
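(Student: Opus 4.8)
The plan is to compute directly the entries of $A_{\lambda_G} = \widehat{A_{(\Gamma,\Psi)}}(\lambda_G)$ using the block description of the Fourier transform (Proposition~\ref{blocchi}) and the explicit form of the left regular representation, and to match them with the entries of the adjacency matrix of $L(\Gamma,\Psi)$ under the lexicographic order on $V_\Gamma\times G$. First I would recall that, by the definition of $\lambda_G$ (or by the observation made in the proof of Proposition~\ref{prop5}),
\[
\lambda_G(g)_{r,p} = \begin{cases} 1 & \text{if } g = g_r g_p^{-1},\\ 0 & \text{otherwise,}\end{cases}
\]
so that for a single group element $g\in G$ the block $\widehat{g}(\lambda_G)=\lambda_G(g)$ is a permutation matrix whose $(r,p)$-entry detects the relation $g_r = g\,g_p$, i.e. $g_p = g^{-1}g_r$.

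Next I would use Proposition~\ref{blocchi} to write $(A_{\lambda_G})_{k(i-1)+r,\,k(j-1)+p} = \widehat{(A_{(\Gamma,\Psi)})_{i,j}}(\lambda_G)_{r,p}$. When $(v_i,v_j)\in E_\Gamma$ the entry $(A_{(\Gamma,\Psi)})_{i,j}$ equals the single group element $\Psi(v_i,v_j)$, so this Fourier block is $\lambda_G(\Psi(v_i,v_j))$, whose $(r,p)$-entry is $1$ precisely when $g_r = \Psi(v_i,v_j)\,g_p$, equivalently $g_p = \Psi(v_i,v_j)^{-1} g_r$; when $(v_i,v_j)\notin E_\Gamma$ the entry is $0\in\mathbb C G$ and the block is zero. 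Thus $(A_{\lambda_G})_{k(i-1)+r,\,k(j-1)+p}=1$ if and only if $(v_i,v_j)\in E_\Gamma$ and $g_p = \Psi(v_i,v_j)^{-1} g_r$, and is $0$ otherwise.

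Finally I would match this with Definition~\ref{coverdef}: under the lexicographic order the index $k(i-1)+r$ corresponds to the vertex $(v_i,g_r)$ and $k(j-1)+p$ to $(v_j,g_p)$, and the adjacency condition $(v_i,g_r)\sim(v_j,g_p)$ reads exactly ``$(v_i,v_j)\in E_\Gamma$ and $g_p = \Psi(v_i,v_j)^{-1} g_r$'', which is the condition just obtained. Hence the two $nk\times nk$ matrices have the same entries. I would also note in passing that symmetry of the resulting matrix (hence well-definedness of the cover graph as an undirected graph) is automatic, since $\lambda_G$ is a unitary representation and $A_{(\Gamma,\Psi)}^*=A_{(\Gamma,\Psi)}$ by Proposition~\ref{simmetry}, so $A_{\lambda_G}^* = A_{\lambda_G}$ by Proposition~\ref{productfou}. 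There is no real obstacle here; the only point requiring care is bookkeeping the two indexing conventions — the block index $k(i-1)+r$ from the Kronecker product versus the lexicographic labelling of $V_\Gamma\times G$ — and checking that the left-multiplication convention in Definition~\ref{coverdef} is the one compatible with $\lambda_G$ rather than with the right regular representation.
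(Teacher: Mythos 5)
Your proposal is correct and follows essentially the same route as the paper: both compute the entry $(A_{\lambda_G})_{k(i-1)+r,\,k(j-1)+p}$ via the block/Kronecker description of the Fourier transform, use that $\lambda_G(g)_{r,p}=1$ exactly when $g=g_rg_p^{-1}$, and match the resulting condition $g_p=\Psi(v_i,v_j)^{-1}g_r$ with Definition~\ref{coverdef}. The only cosmetic difference is that the paper sums over $g\in G$ viewing $A\in C_f(G,M_n(\mathbb C))$ while you isolate the single group element $\Psi(v_i,v_j)$ in each nonzero entry; these are the same computation.
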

\begin{proof}
The adjacency matrix $A\in \mathbb M_n(\mathbb C G)$  of $(\Gamma,\Psi)$ can be regarded as an element of $C_{f}(G, M_n(\mathbb C))$ (see Proposition \ref{isomorfismo}).
Explicitly, for every $g\in G$, the matrix $A(g)\in M_n(\mathbb C)$ is such that
$$
A(g)_{i,j}=\begin{cases} 1 &\mbox{ if } (v_i, v_j)\in E_\Gamma \mbox{ and } \Psi(v_i,v_j)=g\\
0 & \mbox{ otherwise.}
\end{cases}
$$
Then, recalling that $A_{\lambda_G}= \hat{A}(\lambda_G)$, by  Definition \ref{fouriermatrici}  we have, for each $i,j=1,\ldots, n$ and $r,p=1,\ldots, k$,
\begin{eqnarray*}
 \hat{A}(\lambda_G)_{k(i-1)+r,k(j-1)+p}&=&\sum_{g\in G} (A(g)\otimes \lambda_G(g))_{k(i-1)+r,k(j-1)+p}
= \sum_{g\in G} A(g)_{i,j} \lambda_G(g)_{r,p}\\&=& \begin{cases} 1 &\mbox{if }  (v_i, v_j)\in E_\Gamma \mbox{ and }  g_p=\Psi(v_i,v_j)^{-1} g_r\\
0 &\mbox{otherwise}
\end{cases}
\end{eqnarray*}
that coincides exactly with the adjacency rules in $L(\Gamma,\Psi)$ of Definition \ref{coverdef}.
\end{proof}
Lemma \ref{coverad} establishes a bridge from our formalism to that used in \cite{dalfo}.  As a first application we can prove,  only by using  linear algebra, that  a switching equivalence between gain graphs  lifts to an isomorphism between their covers (see \cite[Theorem~9.1]{zasign}).
\begin{proposition}\label{lift}
If the gain graphs $(\Gamma,\Psi_1)$ and $(\Gamma,\Psi_2)$ are switching equivalent, then their cover graphs $L(\Gamma,\Psi_1)$ and $L(\Gamma,\Psi_2)$ are isomorphic.
\end{proposition}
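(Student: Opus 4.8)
The plan is to combine the matrix description of switching equivalence from Theorem~\ref{swe} with the identification of the cover's adjacency matrix as a Fourier transform from Lemma~\ref{coverad}, exploiting the multiplicativity of the Fourier transform established in Proposition~\ref{productfou}. Write $A$ and $B\in M_n(\mathbb C G)$ for the adjacency matrices of $(\Gamma,\Psi_1)$ and $(\Gamma,\Psi_2)$. By Theorem~\ref{swe}, switching equivalence yields a diagonal matrix $F\in M_n(\mathbb C G)$ with $F_{i,i}=f(v_i)\in G$ such that $F^*AF=B$. Apply the Fourier transform at the left regular representation $\lambda_G$: since $\lambda_G$ is unitary, Proposition~\ref{productfou} gives $\widehat{F^*}(\lambda_G)=\widehat{F}(\lambda_G)^*$, and hence
$$B_{\lambda_G}=\widehat{B}(\lambda_G)=\widehat{F}(\lambda_G)^*\,\widehat{A}(\lambda_G)\,\widehat{F}(\lambda_G)=\widehat{F}(\lambda_G)^*\,A_{\lambda_G}\,\widehat{F}(\lambda_G).$$

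The next step is to observe that $P:=\widehat{F}(\lambda_G)$ is a permutation matrix. Indeed, by Proposition~\ref{blocchi} it is block-diagonal with $i$-th diagonal block $\widehat{F_{i,i}}(\lambda_G)=\lambda_G(f(v_i))$; each $\lambda_G(g)$ is a permutation matrix by the very definition of the left regular representation, and a block-diagonal matrix all of whose diagonal blocks are permutation matrices is again a permutation matrix. Consequently $P^*=P^{-1}$, so the displayed identity reads $B_{\lambda_G}=P^{-1}A_{\lambda_G}P$. By Lemma~\ref{coverad}, $A_{\lambda_G}$ and $B_{\lambda_G}$ are precisely the adjacency matrices of $L(\Gamma,\Psi_1)$ and $L(\Gamma,\Psi_2)$ with respect to the lexicographic order on $V_\Gamma\times G$; being a conjugation by a permutation matrix between two $0$–$1$ symmetric matrices, this identity encodes a graph isomorphism $L(\Gamma,\Psi_1)\to L(\Gamma,\Psi_2)$. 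Unwinding the action of $\lambda_G$ on the basis of $\mathbb C G$, one sees that the isomorphism is induced by the bijection $(v_i,g)\mapsto(v_i,f(v_i)^{-1}g)$ of the vertex sets.

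There is no genuine obstacle here: the proposition follows almost formally once the Fourier-transform machinery of Section~\ref{fou} and Theorem~\ref{swe} are in place. The only points requiring a little care are invoking the unitarity of $\lambda_G$ (to replace $\widehat{F^*}(\lambda_G)$ by $\widehat{F}(\lambda_G)^*$) and noting that a permutation matrix is orthogonal, so conjugation by it preserves the $0$–$1$ symmetric structure and therefore the underlying graph. Alternatively, and without passing through matrices at all, one can check directly from Definition~\ref{coverdef} that $(v_i,g)\mapsto(v_i,f(v_i)^{-1}g)$ sends edges of $L(\Gamma,\Psi_1)$ to edges of $L(\Gamma,\Psi_2)$, using Eq.~\eqref{eqsw}; but the matrix argument above is shorter and reuses the tools already developed, so that is the route I would take.
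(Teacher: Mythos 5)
Your argument is correct and is essentially identical to the paper's proof: both apply Theorem~\ref{swe} to get $F^*AF=B$, push this through the Fourier transform at $\lambda_G$ via Proposition~\ref{productfou} (using unitarity of $\lambda_G$), and conclude by Lemma~\ref{coverad} together with the observation that $\widehat{F}(\lambda_G)$ is a permutation matrix. Your extra justification of the permutation-matrix claim and the explicit vertex bijection are welcome details that the paper leaves implicit.
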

\begin{proof}
Since $(\Gamma,\Psi_1)\sim(\Gamma,\Psi_2)$, by virtue of Theorem \ref{swe}, there exists a diagonal $F\in M_n(\mathbb C G)$, with $F_{i,i}\in G$ for each $i$, such that $F^*AF=B$, where $A$ and $B$ are the adjacency matrix of $(\Gamma,\Psi_1)$ and $(\Gamma,\Psi_2)$, respectively. By using the fact that the Fourier transform is multiplicative and commutes with the involution $^*$ (see Proposition \ref{productfou} and take in account  that $\lambda_G$ is unitary), we have
$$
B_{\lambda_G}=\hat{B}(\lambda_G)=\widehat{(F^*AF)}(\lambda_G)= \widehat{F^*}(\lambda_G) \hat{A}(\lambda_G)\hat{F}(\lambda_G)= \hat{F}(\lambda_G)^* A_{\lambda_G} \hat{F}(\lambda_G).
$$
By Lemma \ref{coverad} the adjacency matrices of $L(\Gamma,\Psi_1)$ and $L(\Gamma,\Psi_2)$ are $A_{\lambda_G}$ and $B_{\lambda_G}$, respectively; the thesis follows by noticing that $\hat{F}(\lambda_G)$ is a permutation matrix.
\end{proof}

\begin{remark}\rm
The proof of Proposition \ref{lift} holds, more generally, if  $(\Gamma_1,\Psi_1)$ and $(\Gamma_2,\Psi_2)$ are \emph{switching isomorphic}  (see \cite{zasglos}). In this general case an analogue of Theorem \ref{swe} holds and the adjacency matrices of the gain graphs are conjugated by the product of a diagonal matrix and a \lq\lq permutation matrix\rq\rq $P\in M_n(\mathbb C G)$. The vice versa of Proposition \ref{lift} is false, as we show in the next example.
\end{remark}

\begin{example}\label{examplecontro}\rm
In Fig. \ref{figurecontro} we have represented two gain graphs on $3$ vertices on the cyclic group of $5$ elements $C_5 =\{\xi^j, j=0,1,2,3,4 : \xi=e^{\frac{2\pi i}{5}}\}$. One can check that, although they are not switching isomorphic, their cover graphs are both isomorphic to a cyclic graph on $15$ vertices.
\begin{figure}[h]
\begin{center}
\psfrag{v1}{$v_1$}\psfrag{v2}{$v_2$}\psfrag{v3}{$v_3$}
\psfrag{u1}{$u_1$}\psfrag{u2}{$u_2$}\psfrag{u3}{$u_3$}
\psfrag{id}{$1_{C_5}$}\psfrag{xi}{$\xi$}\psfrag{xi2}{$\xi^2$}
\includegraphics[width=0.4\textwidth]{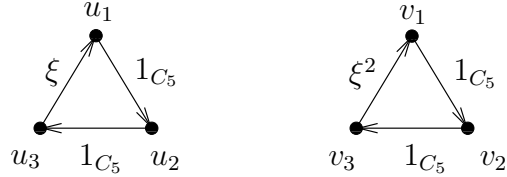}
\end{center}\caption{The gain graphs of Example \ref{examplecontro}.}  \label{figurecontro}
\end{figure}
\end{example}

The relationship between the cover graph and the represented adjacency matrix with respect to a faithful representation allows us to apply the results about balance from Section \ref{sectionbalancenovembre} also in the cover graph setting.

\begin{theorem}\label{coverb}
Let $(\Gamma,\Psi)$ be a connected gain graph  on a group $G$ of order $k$, and $L(\Gamma,\Psi)$ its cover graph.
 The following are equivalent:
\begin{enumerate}
\item[(i)]  $(\Gamma,\Psi)$ is balanced;
\item[(ii)] $\sigma(L(\Gamma,\Psi))=k \sigma(\Gamma)$;
\item[(iii)] $\lambda_1(L(\Gamma,\Psi))=\lambda_1(\Gamma)$ with multiplicity $k$;
\item[(iv)] $L(\Gamma,\Psi)= k\, \Gamma$.
\end{enumerate}
\end{theorem}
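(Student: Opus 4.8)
The plan is to recognize Theorem~\ref{coverb} as the specialization of Theorem~\ref{teo} to the left regular representation, and to close the chain of equivalences along the cycle (i)$\Rightarrow$(iv)$\Rightarrow$(ii)$\Rightarrow$(iii)$\Rightarrow$(i). The pivot is Lemma~\ref{coverad}: the adjacency matrix of $L(\Gamma,\Psi)$ is exactly $A_{\lambda_G}$, the represented adjacency matrix of $(\Gamma,\Psi)$ at the left regular representation $\lambda_G$, which is a faithful unitary representation of degree $k=|G|$. I will also use throughout that $\sigma(A^+)=\sigma(\Gamma)$ and that the adjacency matrix of any graph is real symmetric and entrywise nonnegative, so that its index equals its spectral radius; in particular $\lambda_1(\Gamma)=\rho(\Gamma)$.

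For (i)$\Rightarrow$(iv): a balanced gain graph is switching equivalent to $(\Gamma,\bold{1}_G)$ by \cite[Lemma~5.3]{zaslavsky1}, so Proposition~\ref{lift} yields $L(\Gamma,\Psi)\cong L(\Gamma,\bold{1}_G)$. Unwinding Definition~\ref{coverdef} for the trivial gain, $(u,g)\sim(v,h)$ holds precisely when $(u,v)\in E_\Gamma$ and $h=g$, so $L(\Gamma,\bold{1}_G)$ is the disjoint union of the $k$ layers $V_\Gamma\times\{g\}$, each isomorphic to $\Gamma$; hence $L(\Gamma,\Psi)\cong k\,\Gamma$, which is (iv). The step (iv)$\Rightarrow$(ii) is immediate, since the spectrum of a disjoint union of graphs is the multiset union of their spectra, so $\sigma(k\,\Gamma)=k\,\sigma(\Gamma)$. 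For (ii)$\Rightarrow$(iii): if $\sigma(L(\Gamma,\Psi))=k\,\sigma(\Gamma)$, the two graphs share the same maximal eigenvalue, i.e.\ $\lambda_1(L(\Gamma,\Psi))=\lambda_1(\Gamma)$, and since $\Gamma$ is connected $\lambda_1(\Gamma)$ is a simple eigenvalue of $\Gamma$, hence it occurs in $\sigma(L(\Gamma,\Psi))$ with multiplicity $k$.

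Finally, (iii)$\Rightarrow$(i) is where Theorem~\ref{teo} does the work. Since $\lambda_1(\Gamma)=\rho(\Gamma)$ and, by Lemma~\ref{coverad}, $A_{\lambda_G}$ is the adjacency matrix of $L(\Gamma,\Psi)$, condition (iii) says that $\rho(\Gamma)$ occurs in $\sigma(A_{\lambda_G})$ with multiplicity $k$ --- which is precisely condition (iii) of Theorem~\ref{teo} for the faithful unitary representation $\pi=\lambda_G$ (its hypotheses ``$G$ finite'' and ``$\Gamma$ connected'' being among those of Theorem~\ref{coverb}); that theorem then forces $(\Gamma,\Psi)$ to be balanced, closing the cycle. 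I do not anticipate a genuine obstacle here: the whole argument is bookkeeping on top of Lemma~\ref{coverad}, Proposition~\ref{lift} and Theorem~\ref{teo}. The two points that deserve a careful line are the identification $L(\Gamma,\bold{1}_G)=k\,\Gamma$ at the level of the lexicographically ordered vertex set (so that ``$=$'' in (iv) is literally the isomorphism delivered by Proposition~\ref{lift}), and checking that the multiplicity-$k$ occurrence of $\rho(\Gamma)$ in the spectrum of the cover graph matches verbatim condition (iii) of Theorem~\ref{teo}.
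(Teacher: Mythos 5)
Your proposal is correct and follows essentially the same route as the paper: both rest on Lemma~\ref{coverad} to identify the adjacency matrix of $L(\Gamma,\Psi)$ with $A_{\lambda_G}$, invoke Theorem~\ref{teo} for the faithful unitary representation $\lambda_G$ to handle the spectral conditions, and use Proposition~\ref{lift} together with Definition~\ref{coverdef} to get $L(\Gamma,\Psi)\cong k\,\Gamma$ from balance. The only cosmetic difference is that you arrange the implications in a single cycle (i)$\Rightarrow$(iv)$\Rightarrow$(ii)$\Rightarrow$(iii)$\Rightarrow$(i), whereas the paper proves (i)$\iff$(ii)$\iff$(iii) as one block via Theorem~\ref{teo} and adds (i)$\Rightarrow$(iv)$\Rightarrow$(ii) separately.
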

\begin{proof} The implication (iv)$\implies$ (ii) is obvious.\\
(i)$\iff$(ii)$\iff$(iii)\\
By  Lemma \ref{coverad}, the adjacency matrix of $L(\Gamma,\Psi)$ is $A_{\lambda_G}\in M_{nk}(\mathbb C)$. The left regular representation $\lambda_G$ is unitary, faithful, of degree $k$. The equivalence follows from Theorem~\ref{teo}.\\
(i)$\implies$(iv)\\
 If $(\Gamma,\Psi)$ is balanced, it is switching equivalent to $(\Gamma,\bold{1}_G)$. As a consequence of Proposition~\ref{lift}, the graph $L(\Gamma,\Psi)$ is isomorphic to $L(\Gamma,\bold{1}_G)$ that, from Definition \ref{coverdef}, clearly consists of $k$ disjoint copies of $\Gamma$.
\end{proof}

Even when the graph is not balanced, our formalism says something about the spectrum of the cover graph. The following corollary is equivalent to the main result in \cite{dalfo,mizuno} in the case of a symmetric voltage assignment. Moreover, it is a generalization of \cite[Theorem~3.2]{analgebraic} for cyclic groups and of \cite{mobius} for signed graphs. It can be alternatively proved by using the main result in \cite{g-circ} and noticing that the adjacency matrix of $L(\Gamma,\Psi)$, with a suitable ordering of the vertices, is group-block circulant (see Section \ref{circulant}).

\begin{corollary}\label{teofinale}
Let $(\Gamma,\Psi)$ be a gain graph on a finite group $G$ and let $A\in M_n(\mathbb C G)$ be its adjacency matrix. Let $L(\Gamma,\Psi)$ be the cover graph of $(\Gamma,\Psi)$ and $\sigma(L(\Gamma,\Psi))$ its adjacency spectrum. We have
$$
\sigma(L(\Gamma,\Psi))=\bigsqcup_{\tiny \pi \mbox{ irreducible}} \deg(\pi)\, \sigma( A_\pi).
$$
\end{corollary}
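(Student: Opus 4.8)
The plan is to combine Lemma~\ref{coverad}, which identifies the adjacency matrix of the cover graph $L(\Gamma,\Psi)$ with the Fourier transform $\widehat{A}(\lambda_G)$ of $A\in M_n(\mathbb C G)$ at the left regular representation, with the decomposition of $\lambda_G$ into irreducibles recorded in Eq.~\eqref{regdec}, and with the functorial properties of the Fourier transform on $M_n(\mathbb C G)$ collected in Proposition~\ref{productfou}. In particular I will use that $\widehat{A}$ sends equivalent representations to similar matrices and (a finite direct sum of) representations to (a direct sum of) the corresponding transforms, together with the elementary fact that similar matrices, and direct sums of matrices, behave as expected on the level of spectra counted with multiplicity.

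Concretely, first I would observe that by Lemma~\ref{coverad} one has $\sigma(L(\Gamma,\Psi))=\sigma(A_{\lambda_G})=\sigma(\widehat{A}(\lambda_G))$, as multisets. Next, Eq.~\eqref{regdec} gives the equivalence of representations $\lambda_G\sim\bigoplus_{i=0}^{m-1}\pi_i^{\oplus\deg(\pi_i)}$, where $\pi_0,\ldots,\pi_{m-1}$ is a complete system of irreducible representations of $G$. By the third bullet of Proposition~\ref{productfou}, equivalent representations yield similar Fourier transforms, so $\widehat{A}(\lambda_G)$ is similar to $\widehat{A}\bigl(\bigoplus_i\pi_i^{\oplus\deg(\pi_i)}\bigr)$; by the fourth bullet, applied inductively over the finitely many summands, the latter is similar to $\bigoplus_i\bigl(\widehat{A}(\pi_i)\bigr)^{\oplus\deg(\pi_i)}=\bigoplus_i A_{\pi_i}^{\oplus\deg(\pi_i)}$. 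Since similar matrices have the same spectrum with multiplicities, and the spectrum of a direct sum is the disjoint union (with multiplicity) of the spectra of the blocks, we obtain
$$
\sigma(L(\Gamma,\Psi))=\bigsqcup_{\pi \mbox{ \tiny irreducible}}\deg(\pi)\,\sigma(A_\pi),
$$
which is the claim. (Note that $A_\pi$ depends on the chosen representative of the class of $\pi$ only up to similarity, by Proposition~\ref{productfou}, so $\sigma(A_\pi)$ is well defined.)

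The argument is essentially bookkeeping, so there is no real obstacle; the only points that require mild care are to phrase the direct-sum property of Proposition~\ref{productfou} for an arbitrary finite number of summands rather than just two (a trivial induction), and to keep track of multiplicities throughout, so that the final identity is genuinely an equality of multisets, which is exactly what the $\bigsqcup$ notation encodes. Alternatively, as the statement itself hints, with the vertex order induced by a fixed ordering of $G$ the adjacency matrix of $L(\Gamma,\Psi)$ is $G$-block circulant, so the decomposition also follows directly from Proposition~\ref{prop5}; I would record this as a remark rather than run it as the main line of proof.
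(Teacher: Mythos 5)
Your argument is correct and follows exactly the paper's own proof: Lemma~\ref{coverad} identifies the cover graph's adjacency matrix with $A_{\lambda_G}$, the decomposition of Eq.~\eqref{regdec} together with Proposition~\ref{productfou} shows $A_{\lambda_G}$ is similar to $\bigoplus_{\pi}A_\pi^{\oplus\deg(\pi)}$, and the spectral identity follows. Even your closing remark about the $G$-block circulant alternative mirrors the paper's own comment preceding the corollary.
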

\begin{proof}
By Lemma \ref{coverad}, the adjacency matrix of $L(\Gamma,\Psi)$ is $A_{\lambda_G}\in M_{n|G|}(\mathbb{C})$. By using the decomposition of $\lambda_G$ into the direct sum of all irreducible representations of $G$ of Eq. \eqref{regdec} and combining with Proposition \ref{productfou}, we deduce that $A_{\lambda_G}$ is similar to the matrix $\bigoplus_{\tiny \pi \mbox{ irreducible}}A_\pi^{ \oplus \deg(\pi)}$. The claim follows.
\end{proof}

Observe that among the irreducible representations of a group $G$, we always have the trivial representation $\pi_0 \colon G\to U_1(\mathbb C)$, $g\mapsto 1$ for all $g\in G$, and $A_{\pi_0}$ is exactly the adjacency matrix of the underlying graph. This implies that the spectrum of $L(\Gamma,\Psi)$, being $(\Gamma,\Psi)$ balanced or not, always contains the spectrum of the underlying graph, and in particular, $ \lambda_1(L(\Gamma,\Psi))=\lambda_1(\Gamma)$ (by Eq. \eqref{spectralradius} in fact, we also have $\lambda_1(A_\pi)\leq \lambda_1(\Gamma)$ for every unitary representation $\pi$).

Notice that, for a group of order $k$, we have a natural embedding $\phi \colon G \hookrightarrow Sym(k)$ given by the left multiplication on $G$ itself. Then  a $G$-gain graph $(\Gamma, \Psi)$ can be seen as a $Sym(k)$-gain graph $(\Gamma , \phi\circ \Psi)$ with adjacency matrix $A\in M_n(\mathbb C Sym(k))$. In particular, the adjacency matrix of the cover graph $L(\Gamma,\Psi)$ is exactly  $A_{\varrho_{k}}$, where $\varrho_{k}$ is the permutation representation of $Sym(k)$.
More generally, if $(\Gamma, \Psi)$ is a gain graph on a subgroup of $Sym(m)$, the matrix  $A_{\varrho_{m}}$ is still the adjacency matrix of a graph that covers, in a precise sense, the gain graph $(\Gamma,\Psi)$ (see \cite{gross} and, for recent developments, \cite{arbitro}). Notice that anyway, by Proposition \ref{productfou} and the inclusion of $\pi_0$ in $\varrho_m$, the spectrum of $A_{\varrho_{m}}$ contains the spectrum of the underlying graph.

Thanks to our formalism, we can push forward this approach and generalize it  to $\mathbb T$-gain ``cover'' graphs.
Suppose that we have a $\mathbb T$-gain graph with $m$ vertices, with complex adjacency matrix $M\in M_m(\mathbb C)$.
Now suppose that:
\begin{itemize}
\item $\pi$ is a unitary representation of a group $G$ such that $\pi\sim \pi_1\oplus\cdots\oplus \pi_l$, with $\pi_i$ irreducible and $\deg(\pi)=k$;
\item  $(\Gamma,\Psi)$ is a  $G$-gain graph on $n$ vertices and adjacency matrix $A\in M_n(\mathbb C G)$;
\item $m=nk$ and  $M=A_{\pi} = \widehat{A}(\pi)$.
\end{itemize}
Then we have:
$ \sigma(M)=\sigma(A_{\pi_1}) \cup \ldots \cup\sigma(A_{\pi_l}).$

The following simple example suggests the existence of new possible applications of this point of view.
\begin{example}\label{exampleklein}\rm
Let us consider the Klein group $V=<a,b : a^2=b^2 = (ab)^2=1_V>$, which is the commutative group of $4$ elements $V=\{1_V, a,b,c\}$, where we put $c=ab$. Consider now the gain graph $(\Gamma,\Psi)$ on $V$ in Fig. \ref{figuradibase} (we do not need to orient edges, since all the nontrivial elements of $V$ are involutions)
\begin{figure}[h]
\begin{center}
\psfrag{1}{$1$}\psfrag{2}{$2$}\psfrag{3}{$3$} \psfrag{4}{$4$}
\psfrag{id}{$1_V$}\psfrag{a}{$a$}\psfrag{b}{$b$}\psfrag{c}{$c$}\psfrag{GG}{$\Gamma$}
\includegraphics[width=0.2\textwidth]{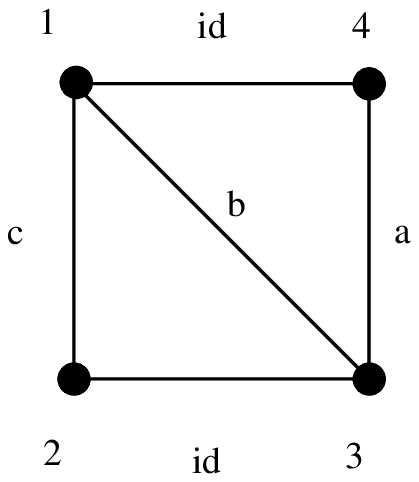}
\end{center}\caption{The gain graph $(\Gamma,\Psi)$ of Example \ref{exampleklein}.}  \label{figuradibase}
\end{figure}
which has adjacency matrix $A= \left(
                              \begin{array}{cccc}
                                0 & c & b & 1_V \\
                                c & 0 & 1_V & 0 \\
                                b & 1_V & 0 & a \\
                                1_V & 0 & a & 0 \\
                              \end{array}
                            \right)
$.
Now consider the $2$-dimensional unitary representation $\pi$ of $V$ given in Table \ref{tableklein}.
\begin{table}
\begin{tabular}{|c|c|c|c|}
\hline
$1_V$ & $a$ & $b$ & $c$ \\
\hline
$\left(
   \begin{array}{cc}
     1 & 0 \\
     0 & 1\\
   \end{array}
 \right)$  &  $\left(
   \begin{array}{cc}
     -1 & 0 \\
     0 & -1 \\
   \end{array}
 \right)$  & $\left(
   \begin{array}{cc}
     0 & 1 \\
     1 & 0 \\
   \end{array}
 \right)$  & $\left(
   \begin{array}{cc}
     0 & -1 \\
     -1 & 0 \\
   \end{array}
 \right)$ \\
\hline
\end{tabular}
\smallskip
\caption{The representation $\pi$ of $V$ in Example \ref{exampleklein}.} \label{tableklein}
\end{table}
By using the orthogonality relations of characters one can check that $\pi \sim \pi_1 \oplus \pi_2$, where
$$
\pi_1(1_V) = 1, \ \pi_1(a) = -1, \ \pi_1(b) = 1, \ \pi_1(c) = -1
$$
and
$$
\pi_2(1_V) = 1, \ \pi_2(a) = -1, \ \pi_2(b) = -1, \ \pi_2(c) = 1.
$$
Now the matrices $A_{\pi_1}$, $A_{\pi_2}$, and $A_\pi$, which are all symmetric matrices with entries in $\{-1,0,1\}$, can be regarded as the adjacency matrices of the signed graphs $\Gamma_{\pi_1}$, $\Gamma_{\pi_2}$, and $\Gamma_{\pi}$, respectively, represented in Fig. \ref{itre}. Continuous lines denote positive edges and dashed lines denote negative edges.
\begin{figure}[h]
\begin{center}
\psfrag{1}{$1$}\psfrag{2}{$2$}\psfrag{3}{$3$} \psfrag{4}{$4$}\psfrag{5}{$5$}\psfrag{6}{$6$}\psfrag{7}{$7$} \psfrag{8}{$8$}\psfrag{G1}{$\Gamma_{\pi_1}$}\psfrag{G2}{$\Gamma_{\pi_2}$}\psfrag{Gi}{$\Gamma_\pi$}
\includegraphics[width=0.5\textwidth]{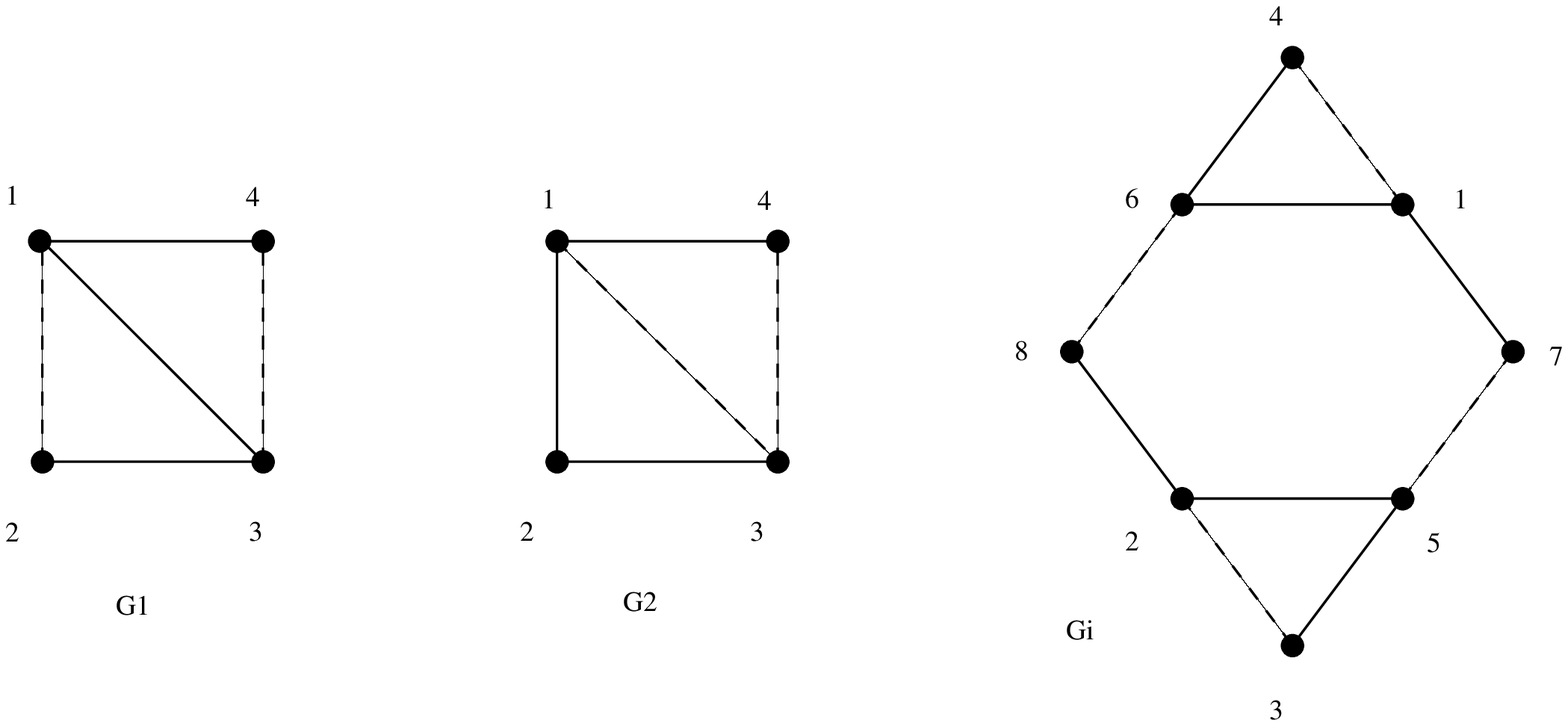}
\end{center}\caption{The signed graphs $\Gamma_{\pi_1}$, $\Gamma_{\pi_2}$, and $\Gamma_{\pi}$ of Example \ref{exampleklein}.}  \label{itre}
\end{figure}
More precisely:
\begin{align*}
A_{\pi_1}= \left(
                                        \begin{array}{cccc}
                                          0 & -1 & 1 & 1 \\
                                          -1 & 0 & 1 & 0 \\
                                          1 & 1 & 0 & -1 \\
                                           1 & 0 & -1 & 0 \\
                                           \end{array}
                                      \right)
\qquad
A_{\pi_2}=\left(
                                        \begin{array}{cccc}
                                          0 & 1 & -1 & 1 \\
                                          1 & 0 & 1 & 0 \\
                                          -1 & 1 & 0 & -1 \\
                                           1 & 0 & -1 & 0 \\
                                           \end{array}
                                      \right)
\end{align*}
and
$$
A_\pi = \left(
                              \begin{array}{c|c|c|c}
                                O_{2,2} & \pi(c) & \pi(b) & \pi(1_V) \\ \hline
                                \pi(c) & O_{2,2} & \pi(1_V) & O_{2,2} \\ \hline
                                \pi(b) & \pi(1_V) & O_{2,2} & \pi(a) \\ \hline
                                \pi(1_V) & O_{2,2} & \pi(a) & O_{2,2} \\
                              \end{array}
                            \right) = \left(
                                        \begin{array}{cc|cc|cc|cc}
                                          0 & 0 & 0 & -1 & 0 & 1 & 1 & 0 \\
                                          0 & 0 & -1 & 0 & 1 & 0 & 0 & 1 \\ \hline
                                          0 & -1 & 0 & 0 & 1 & 0 & 0 & 0 \\
                                           -1 & 0 & 0 & 0 & 0 & 1 & 0 & 0 \\ \hline
                                          0 & 1 & 1 & 0 & 0 & 0 & -1 & 0 \\
                                          1 & 0 & 0 & 1 & 0 & 0 & 0 & -1 \\  \hline
                                          1 & 0 & 0 & 0 & -1 & 0 & 0 & 0 \\
                                          0 & 1 & 0 & 0 & 0 & -1 & 0 & 0 \\
                                        \end{array}
                                      \right).
$$
An explicit spectral computation gives $\sigma(A_{\Gamma_\pi}) = \sigma(A_{\Gamma_{\pi_1}})\cup \sigma(A_{\Gamma_{\pi_2}})$, with
$$
\sigma(A_{\Gamma_{\pi_1}}) = \left\{0,1, \frac{-1\pm \sqrt{17}}{2}\right\}, \ \sigma(A_{\Gamma_{\pi_2}}) = \{\pm 1, \pm 2\}.
$$
Observe that, with respect to the notation in the construction preceding Example \ref{exampleklein}, we have $G=V$; $k=l=2$; $n=4$; $m=8$.
\end{example}

\section*{Acknowledgements}
We want to express our deepest gratitude to Francesco Belardo for introducing us to the beautiful theory of gain graphs and for useful and stimulating discussions. The second author thanks the Austrian Science Fund project FWF P29355-N35.

%%%%%%%%%%%%%%%%%%%%%%%%%%%%%%%%%%%%%%%%%%%%%%%%%%%%%%%%%%%%%%%%%%%%%%%%%%%%%%%%%%%%%%%%%%%

\end{document}